\theoremstyle{plain}
\newtheorem{theorem}{Theorem}[section]
\newtheorem{corollary}[theorem]{Corollary}
\newtheorem{lemma}[theorem]{Lemma}
\newtheorem{proposition}[theorem]{Proposition}
\newtheorem{fact}[theorem]{Fact}
\newcounter{MainCorollaryCounter}
\newtheorem{MainCorollary}[MainCorollaryCounter]{Corollary}
\newcounter{MainTheoremCounter}
\newtheorem{MainTheorem}[MainTheoremCounter]{Theorem}
\theoremstyle{definition}
\newtheorem{definition}[theorem]{Definition}
\newtheorem{remark}[theorem]{Remark}
\newtheorem*{setup}{Setup}
\newcounter{MainProblemCounter}
\newtheorem{MainProblem}[MainProblemCounter]{Problem}
\newcounter{MainPrimeProblemCounter}
\newtheorem{MainPrimeProblem}[MainPrimeProblemCounter]{Problem}
\newcommand{\textdef}[1]{\textit{#1}}
\newcommand{\complex}{\mathbb{C}}
\newcommand{\fieldF}{\mathbb{F}}
\newcommand{\modu}[1]{\overline{#1}}
\newcommand{\orbit}{\mathcal{O}}
\newcommand{\li}[1]{\ell_{#1}}
\DeclareMathOperator{\rk}{rk}
\DeclareMathOperator{\fp}{Fix}
\DeclareMathOperator{\aut}{Aut}
\DeclareMathOperator{\sym}{Sym} 
\DeclareMathOperator{\gl}{GL} 
\DeclareMathOperator{\pgl}{PGL} 
\DeclareMathOperator{\psl}{PSL} 
\DeclareMathOperator{\agl}{AGL} 
\DeclareMathOperator{\proj}{\mathbb{P}} 
\DeclareMathOperator{\points}{\mathcal{P}} 
\DeclareMathOperator{\lines}{\mathcal{L}}
\begin{document}
\title[Recognizing $\pgl_n$ via generic transitivity]{Towards the recognition of $\pgl_n$ via a high degree of generic transitivity}
\author{Tuna Alt\i{}nel}
\address{Universit\'e de Lyon \\
Universit\'e Claude Bernard Lyon 1\\
CNRS UMR 5208\\
Institut Camille Jordan\\
43 blvd du 11 novembre 1918\\
F-69622 Villeurbanne cedex\\
France}
\email{altinel@math.univ-lyon1.fr}
\author{Joshua Wiscons}
\address{Department of Mathematics and Statistics\\
California State University, Sacramento\\
Sacramento, CA 95819, USA}
\email{joshua.wiscons@csus.edu}
\thanks{The second author was partially supported by the Sacramento State Research and Creative Activity Faculty Awards Program.}
\date{\today}
\keywords{multiple transitivity, finite Morley rank, projective geometry}
\subjclass[2010]{Primary 20B22; Secondary 03C60}
\begin{abstract}
In 2008,  Borovik and Cherlin posed the problem of showing that the degree of \emph{generic} transitivity of an infinite permutation group of finite Morley rank $(X,G)$ is at most $n+2$ where $n$ is the Morley rank of $X$. Moreover, they conjectured that the bound is only achieved (assuming transitivity) by $\pgl_{n+1}(\fieldF)$ acting naturally on projective $n$-space. We solve the problem under the two additional hypotheses that (1) $(X,G)$ is $2$-transitive, and (2) $(X-\{x\},G_x)$ has a definable quotient equivalent to $(\proj^{n-1}(\fieldF),\pgl_{n}(\fieldF))$. The latter hypothesis drives the construction of the underlying projective geometry and is at the heart of an inductive approach to the main problem.
\end{abstract}
\maketitle

\section{Introduction}
Groups of finite Morley rank (fMr) are equipped with a model-theoretic notion of dimension generalizing the usual Zariski dimension for algebraic groups. Affine algebraic groups over algebraically closed fields are the primary examples, and as such, the study of groups of fMr is heavily inspired by the algebraic theory. However, as there is no topology, many of the algebraic tools have rudimentary analogs at best, and the methods of proof often follows lines from finite group theory. The guiding problem for the theory of groups of fMr is the classification of the simple ones, which is motivated by the \emph{Algebraicity Conjecture} of Gregory Cherlin and Boris Zilber: the simple groups of fMr are simple algebraic groups over algebraically closed fields.

The theory of groups of fMr developed over the past forty years with a high point being the resolution of the Algebraicity Conjecture for those groups which contain an infinite elementary abelian $2$-group \cite{ABC08}. Much remains to be addressed around the Algebraicity Conjecture with a key issue being the lack of a Feit-Thompson Theorem---it is unknown if there exist simple groups of fMr without elements of order two. Nevertheless, the field has achieved a sufficient level of maturity that some attention is shifting to general questions about \emph{permutation groups}.

The current research on permutation groups is primarily organized around problems raised by Alexandre Borovik and Gregory Cherlin in \cite{BoCh08}. The main result of \cite{BoCh08} is that there exists a global bound on the rank of a \emph{primitive} permutation group of fMr as a function of the rank of the set being acted upon, and the problems they pose are focused on tightening  this bound. Their proof begins with the nontrivial observation that a bound on the rank of a primitive group can be derived from a bound on the \emph{degree of generic transitivity} of the action (defined below), and it is an investigation of this latter bound that they propose as way to tighten the former one.

\begin{definition}
A permutation group of fMr $(X,G)$ is called \textdef{generically $n$-transitive} if the induced action of $G$ on $X^n$ has an orbit $\orbit$ for which $\rk (X^n - \orbit) < \rk (X^n)$. If $(X,G)$ is generically $n$-transitive but not generically $(n+1)$-transitive, we say that $n$ is the \textdef{degree of generic transitivity}.
\end{definition}

The notion of generic $n$-transitivity is quite natural---much more natural than ordinary $n$-transitivity in our context. The canonical example is that of $\gl_n(\complex)$ acting on the vector space $V:= \complex^n$. The action is quite far from being $2$-transitive (and not even transitive), but it is generically $n$-transitive. The large orbit $\orbit$ of $\gl_n(\complex)$ on $V^n$ is precisely the set of bases for $V$. In \cite{BoCh08}, Borovik and Cherlin pose the following problem, which  seeks to establish a \emph{natural} bound on the degree of generic transitivity.

\begin{MainProblem}[\protect{\cite[Problem~9]{BoCh08}}]\label{prob.ProbPGL}
Show that if $(X,G)$ is an infinite, transitive, generically $(n+2)$-transitive permutation group of fMr with $\rk X = n$, then
$(X,G)\cong(\proj^{n}(\fieldF),\pgl_{n+1}(\fieldF))$ for some algebraically closed field $\fieldF$.
\end{MainProblem}

The early work of Ehud Hrushovski with actions on strongly minimal sets solves Problem~\ref{prob.ProbPGL}  when $X$ has rank $1$ (see \cite[Theorem~11.98]{BoNe94}). The first investigation of the rank $2$ case was by Ursula Gropp in 1992 \cite{GrU92}; the full solution for rank $2$ is much more recent \cite{AlWi15}. 

Generic $n$-transitivity has also been studied in the algebraic category where there is an open question about the degree of generic transitivity for actions of the simple algebraic groups on the various coset spaces of maximal parabolic subgroups. This question was raised by Vladimir Popov in \cite{PoV07} where he also answered it in characteristic~$0$. Popov's work may even have concrete implications for our context (via an analog of the O'Nan-Scott Theorem for primitive groups developed by Dugald Macpherson and Anand Pillay in \cite{MaPi95}); however, there are several issues to overcome with such an approach, not the least of which is the restriction on the characteristic.

In this article, we take up Problem~\ref{prob.ProbPGL} in general, solving it under two additional hypotheses that appear to be the essential ingredients needed to recognize the underlying projective geometry.  To streamline the discussion, we introduce some terminology.

\begin{definition}
An infinite permutation group $(X,G)$ of fMr with $n:= \rk X$ is said to be \textdef{extremal} if it is transitive and generically $(n+2)$-transitive. If $(X,G) \cong (\proj^m(\fieldF),\pgl_{m+1}(\fieldF))$ for some algebraically closed field $\fieldF$ of rank~$r$, then we say that $(X,G)$ is \textdef{projective over a  field of rank $r$}.
\end{definition}

It is not hard to see that projective permutation groups over a field of rank $1$ are extremal---in this new language, Problem~\ref{prob.ProbPGL} seeks the converse. 

\begin{MainPrimeProblem}\label{prob.ProbPrimePGL}
Show that every extremal permutation group of fMr is projective over a  field of rank $1$.
\end{MainPrimeProblem}

To state our main result, we first recall that whenever $(X,G)$ is a permutation group and $\sim$ is a $G$-invariant equivalence relation on $X$, then $G$ acts naturally on $X/{\sim}$, and if $\modu{G}$ is the image of $G$ in $\sym\left(X/{\sim}\right)$, the permutation group $(X/{\sim},\modu{G})$ is called a \textdef{quotient} of $(X,G)$. 

\begin{MainTheorem}\label{thm.A}
Let $(X,G)$ be an extremal permutation group of fMr. Further, assume that 
\begin{enumerate}
\item $(X,G)$ is $2$-transitive, and 
\item for some $x\in X$, $(X-\{x\},G_x)$ has a definable quotient, with classes of infinite size, that is projective over a  field of rank $1$.
\end{enumerate}
Then $(X,G)$ is projective over a  field of rank $1$.
\end{MainTheorem}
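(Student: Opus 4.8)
The plan is to bootstrap the rank-$(n-1)$ projective structure supplied by hypothesis~(2) up to a rank-$n$ projective structure on $X$ itself, and then invoke the recognition of $\pgl_{n+1}$ acting on $\proj^n$. Fix $x \in X$ and let $(\oy, \modu{G_x})$ with $\oy := (X-\{x\})/{\sim}$ be the definable quotient from hypothesis~(2), so $(\oy,\modu{G_x}) \cong (\proj^{n-1}(\fieldF),\pgl_n(\fieldF))$ for an algebraically closed field $\fieldF$ of rank $1$; in particular $\rk \oy = n-1$ and, since the classes are infinite, each $\sim$-class has rank $1$ and $\rk(X-\{x\}) = \rk X = n$ is consistent with $n = (n-1) + 1$. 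The first step is to understand the kernel $K_x := \ker(G_x \to \modu{G_x})$ and the structure of $G_x$: using that $(X,G)$ is extremal and $2$-transitive, $(X-\{x\},G_x)$ is itself transitive and generically $(n+1)$-transitive, and the quotient being projective of the "extremal minus one" type forces $\modu{G_x}$ to be as large as possible. I would analyze the action of $K_x$ on the fibers, aiming to show $K_x$ acts on each class, and that generic $(n+1)$-transitivity of $G_x$ on $X-\{x\}$ descends to generic $n$-transitivity on $\oy$ (which $\pgl_n$ realizes on $\proj^{n-1}$) with the "extra" degree of transitivity concentrated in the fibers — i.e. $K_x$ acts generically transitively (indeed, one expects sharply so) on a generic fiber.

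The second, and geometrically central, step is to build the lines of the projective geometry on $X$. The points of $\proj^{n-1}(\fieldF) = \oy$ should correspond to the lines of $\proj^n(\fieldF) = X$ through the point $x$: that is, for each class $c \in \oy$, the set $\li{c} := c \cup \{x\}$ (the fiber together with $x$) is the candidate for a line through $x$. One must then propagate this: using $2$-transitivity of $G$ on $X$, every point $z \neq x$ plays the role of $x$, giving a family of candidate lines through each point, and the key is to check these are mutually consistent — that two points lie on a unique common line, and that the resulting incidence geometry satisfies the Veblen–Young axioms (or the axioms for a projective space of dimension $\geq 3$, which for $n \geq 3$ is automatic once lines and planes behave correctly; the rank-$2$ case is already handled in \cite{AlWi15}, so I may assume $n \geq 3$). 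Here the definability of everything, plus the rank computations ($\rk \li{c} = 1$, and two lines through $x$ meeting only in $x$), should be leveraged to pin down the combinatorics. The group $G$ acts on this geometry by automorphisms, definably and transitively on points.

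The third step is to identify the field and the group. Once $X$ carries a definable projective geometry of dimension $n$ with $G \leq \aut$, the field coordinatizing it is interpretable; a Morley rank computation shows it has rank $1$, hence is algebraically closed, and by the fundamental theorem of projective geometry $\aut(\proj^n(\fieldF)) = \pgl_{n+1}(\fieldF) \rtimes \aut(\fieldF)$. Since $G$ is connected of finite Morley rank acting definably, it lands in $\pgl_{n+1}(\fieldF)$ (field automorphisms are ruled out by a rank/connectedness argument), and a dimension count using extremality — $\rk G$ must match $\rk \pgl_{n+1}(\fieldF)$ forced by generic $(n+2)$-transitivity on $\proj^n$ — gives $G = \pgl_{n+1}(\fieldF)$, so $(X,G) \cong (\proj^n(\fieldF),\pgl_{n+1}(\fieldF))$ as desired.

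I expect the main obstacle to be the second step: manufacturing a globally consistent line system from the local fiber data at each point and verifying the projective-space axioms definably. The subtlety is that hypothesis~(2) only gives the quotient at a single $x$ (then spread by $2$-transitivity), so one must show the various lines-through-a-point structures glue — concretely, that for $z \neq x$ on the line $\li{c}$, the fiber structure at $z$ recovers the same $\li{c}$, and that coplanarity is well-defined. This is where the interaction between the permutation-group combinatorics, the finite Morley rank machinery (definability of the geometry, $\rk$ additivity along the fibration $X - \{x\} \to \oy$), and classical incidence geometry all have to be made to cooperate; controlling $K_x$ precisely enough in the first step is what makes this tractable.
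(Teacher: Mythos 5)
Your outline matches the paper's strategy at the level of architecture (lines through $x$ are the $\sim$-classes with $x$ adjoined, spread the structure around by $2$-transitivity, verify the projective space axioms, coordinatize, and identify $G$ with $\pgl_{n+1}(\fieldF)$ by sharp generic transitivity), and your third step is essentially the paper's final section. But the proposal stops exactly where the real work begins, and the missing ideas are genuine gaps rather than routine details. First, the gluing problem you flag --- that the fiber structure at $z$ must recover the same line $\li{c}$ as the fiber structure at $x$ --- is not solved by any mechanism you propose; the paper sidesteps it entirely by \emph{defining} collinearity symmetrically and group-theoretically: $\lambda(a,b,c)$ holds iff the $G$-orbit of the triple $(a,b,c)$ has rank at most $2n+1$. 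This definition is manifestly $G$-invariant and symmetric in its arguments, and one then \emph{proves} (using a lemma identifying $\modu{G_{1,2,\ldots,k}}$ with $\modu{G_{1,\overline{2},\ldots,\overline{k}}}$ for tuples in general position) that $\li{12}=\overline{2}\cup\{1\}$. Without some such symmetric definition, ``two points determine a unique line'' has no evident proof from the one-point-at-a-time fiber data.

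Second, you treat Veblen's axiom as something that ``should'' follow from definability and rank bookkeeping, but in the paper it is the hardest step and requires substantial group-theoretic input that your plan never touches: one must first show the generic $(n+2)$-transitivity is \emph{sharp} (whence $\rk G=n(n+2)$ and connectedness of generic point stabilizers), then identify the generic $(n+1)$-point stabilizer as a self-centralizing maximal good torus (via an analysis of the kernel $K$ acting on the lines through a point as $\agl_1$ inside $\agl_1$ or $\psl_2$), and deduce the Fixed-Point Assumption $\fp(G_{1,\ldots,n+1})=\{1,\ldots,n+1\}$. Both the lemma that triangles are exactly triples in general position and the Veblen argument itself (an orbit count for the two-line stabilizer on $\lines(x)$, which has exactly four orbits, plus a contradiction via the Fixed-Point Assumption in the residual case $n=3$) depend on this. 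Your proposed control of $K_x$ (``generically sharply transitive on a generic fiber'') is not what is needed and is not how the kernel actually behaves; what matters is that $K_1\cap K_2=1$ and that the image of $K$ on each line is a Borel of the induced $\agl_1$ or $\psl_2$. So the proposal is a plausible roadmap but is missing the two ideas that make the theorem go through.
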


The second hypothesis is key, and it is at the heart of an inductive approach to Problem~\ref{prob.ProbPGL}. Indeed, taking such an approach, the existence of a definable quotient of $(X-\{x\},G_x)$ with infinitely many classes of infinite size immediately implies that the quotient is projective over a  field of rank $1$. We formalize this with a corollary. Recall that a permutation group $(X,G)$ is \textdef{virtually (definably) primitive} if every (definable) $G$-invariant equivalence relation on $X$ either has finite classes or finitely many classes.

\begin{MainCorollary}\label{cor.A}
Let $(X,G)$ be an extremal permutation group of fMr, and assume Problem~\ref{prob.ProbPGL} is solved for sets of rank less than $\rk X$. Then 
$(X,G)$ is virtually definably primitive, and we have the following case division:
\begin{enumerate}
\item $(X,G)$ is projective over a  field of rank $1$;
\item\label{cor.Item.2} $(X,G)$ is not $2$-transitive;
\item\label{cor.Item.3}  $(X,G)$ is $2$-transitive with $(X-\{x\},G_x)$ virtually definably primitive for all $x\in X$.
\end{enumerate}
\end{MainCorollary}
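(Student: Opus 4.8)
The plan is to deduce the corollary from Theorem~\ref{thm.A} and two standard facts about generic transitivity: (i) a quotient of a generically $k$-transitive permutation group is again generically $k$-transitive; and (ii) if $(X,G)$ is transitive and generically $k$-transitive, then $(X-\{x\},G_x)$ is generically $(k-1)$-transitive for every $x\in X$. I will also use the elementary computation that the degree of generic transitivity of $(\proj^m(\fieldF),\pgl_{m+1}(\fieldF))$ equals $m+2$: the rank of $\pgl_{m+1}$ is $(m+1)^2-1=m^2+2m<(m+3)m$, so there is no generic orbit on $(\proj^m)^{m+3}$.

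First I would show $(X,G)$ is virtually definably primitive. Writing $n:=\rk X$, suppose for contradiction that some definable $G$-invariant equivalence relation $\sim$ on $X$ has infinitely many classes \emph{and} infinite classes, and put $m:=\rk(X/{\sim})$. Infinitely many classes forces $m\geq 1$, and since $G$ is transitive on $X$ all classes have a common, positive rank, so $m\leq n-1$; thus $0<m<n$ and $X/{\sim}$ is infinite. By fact (i), $(X/{\sim},\modu{G})$ is transitive and generically $(n+2)$-transitive, hence generically $(m+2)$-transitive, so it is an extremal permutation group of rank $m<\rk X$; since Problem~\ref{prob.ProbPGL} is assumed solved in rank $m$, $(X/{\sim},\modu{G})\cong(\proj^m(\fieldF),\pgl_{m+1}(\fieldF))$, whose degree of generic transitivity is $m+2<n+2$---contradicting generic $(n+2)$-transitivity. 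So every definable $G$-invariant equivalence relation on $X$ has $\rk(X/{\sim})\in\{0,n\}$: if it is $0$, then $X/{\sim}$ is finite; if it is $n$, the generic class is finite and hence every class is finite, since $\{x\in X: [x]_\sim\text{ is infinite}\}$ is $G$-invariant, so empty or all of $X$, the latter being incompatible with $\rk(X/{\sim})=n$. Therefore $(X,G)$ is virtually definably primitive.

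Next I would establish the case division. If $(X,G)$ is not $2$-transitive, we are in case~\eqref{cor.Item.2}. Assume $(X,G)$ is $2$-transitive. If $(X-\{x\},G_x)$ is virtually definably primitive for every $x\in X$, we are in case~\eqref{cor.Item.3}. Otherwise, fix $x\in X$ and a definable $G_x$-invariant equivalence relation $\approx$ on $X-\{x\}$ with infinitely many classes and infinite classes. As $(X,G)$ is $2$-transitive, $G_x$ is transitive on $X-\{x\}$, so---exactly as above---$0<m'<n$ where $m':=\rk((X-\{x\})/{\approx})$, every $\approx$-class has rank $n-m'\geq 1$ (hence infinite size), and $((X-\{x\})/{\approx},\modu{G_x})$ is transitive and infinite. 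By fact (ii), $(X-\{x\},G_x)$ is generically $(n+1)$-transitive, so by fact (i) the quotient $((X-\{x\})/{\approx},\modu{G_x})$ is generically $(n+1)$-transitive, hence generically $(m'+2)$-transitive; thus it is an extremal permutation group of rank $m'<\rk X$, and since Problem~\ref{prob.ProbPGL} is assumed solved in rank $m'$, it is projective over a field of rank $1$. At this point both hypotheses of Theorem~\ref{thm.A} are in force: $(X,G)$ is extremal and $2$-transitive, and $(X-\{x\},G_x)$ has a definable quotient, with classes of infinite size, that is projective over a field of rank $1$. Theorem~\ref{thm.A} then yields that $(X,G)$ is projective over a field of rank $1$, which is case~(1).

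The substantive content is entirely in Theorem~\ref{thm.A}, so the corollary is essentially a matter of bookkeeping; the only points that genuinely need care are facts (i) and (ii) about descent of generic transitivity and the degree computation for $(\proj^m(\fieldF),\pgl_{m+1}(\fieldF))$, which is what makes the first step an honest contradiction rather than a mere remark.
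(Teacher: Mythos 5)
Your proposal is correct and follows essentially the same route as the paper: virtual definable primitivity via passing to a quotient of strictly smaller rank that would be ``too'' generically transitive for the inductive hypothesis, and then, in the remaining case, producing a quotient of $(X-\{x\},G_x)$ that the inductive hypothesis identifies as projective so that Theorem~\ref{thm.A} applies. Your explicit rank computation showing $\pgl_{m+1}(\fieldF)$ is not generically $(m+3)$-transitive just makes the paper's contradiction step more self-contained.
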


The idea behind our proof of Theorem~\ref{thm.A} is that if the elements of $X$ are the points of the expected  geometry, then the classes of the quotient of $(X-\{x\},G_x)$ ought to define the lines through $x$. The $2$-transitivity hypothesis keeps things uniform and allows us to fairly easily define the line through two points. 
Theorem~\ref{thm.A} is a general version of \cite[Proposition~5.1]{AlWi15}, but, notably, the so-called Fixed-point Assumption has now been removed. (However, our condition on the point stabilizer is now slightly stronger.) 

Finally, we should emphasize that our proof of Theorem~\ref{thm.A} is highly geometric and requires very little analysis of the internal structure of $G$. As in \cite{AlWi15}, this will likely stand in sharp contrast to the treatment of the third case in Corollary~\ref{cor.A} (and to some degree the second one as well), which we find to be a good reason for isolating the present theorem.

\section{Background}

We assume familiarity with the theory of groups of fMr, and refer the reader to  \cite{PoB87}, \cite{BoNe94}, and \cite{ABC08} for the necessary background.  Those unfamiliar with Morley rank will do well to translate ``rank'' to ``dimension'' and ``definable'' to ``constructible.'' 

Regarding \emph{permutation groups} of fMr, we provide the relevant definitions as needed and recommend \cite{BoCh08} and \cite{AlWi15} to fill in the details. The solutions to Problem~\ref{prob.ProbPGL} in rank $1$ and $2$ are included below as they, among other things, allow us to ignore small geometries and focus on the generic case.

\begin{fact}[Hrushovski, see \protect{\cite[Theorem~11.98]{BoNe94}}]\label{fact.Hru}
If $(X,G)$ is a transitive permutation group of fMr with $X$ of rank and (Morley) degree $1$, then either
\begin{enumerate}
\item $G^\circ$ is abelian, acting regularly on $X$,
\item $(X,G)\cong (\fieldF,\agl_1(\fieldF))$ for some algebraically closed field $\fieldF$, or
\item $(X,G)\cong (\proj^1(\fieldF),\psl_2(\fieldF))$ for some algebraically closed field $\fieldF$.
\end{enumerate}
\end{fact}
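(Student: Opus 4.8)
This is the classical description of transitive permutation groups of finite Morley rank on a strongly minimal set --- the hypothesis on $X$ says exactly that $X$ is strongly minimal --- and although it is quoted here from \cite{BoNe94}, let me sketch how I would argue. First I would reduce to $G$ connected: since $X$ has degree $1$ and $G$ permutes its finitely many $G^\circ$-orbits transitively, all $G^\circ$-orbits have equal size, which in a strongly minimal set forces a single orbit, so $G^\circ$ is already transitive; and if $G^\circ$ fell into (2) or (3), then $G/G^\circ$ would act by definable automorphisms on $\agl_1(\fieldF)$ or $\psl_2(\fieldF)$ and hence on $\fieldF$, but a nontrivial finite group of field automorphisms would make $\fieldF$, and so $G$, fail to be of finite Morley rank. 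So assume $G$ connected, fix $x \in X$, and split on $\rk G_x$.

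If $G_x$ is finite then $\rk G = \rk G_x + \rk X = 1$, so $G$ is abelian by Reineke's theorem; an abelian transitive permutation group has a normal, hence core-free, hence trivial point stabilizer, so $G$ acts regularly, which is conclusion (1).

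If $G_x$ is infinite, then $\rk G \geq 2$ and the target is (2) or (3). The crucial step --- the one I expect to be the main obstacle --- is to show the action is $2$-transitive. Here $G_x$ is an infinite group acting on the strongly minimal set $X - \{x\}$, so it has a generic orbit, and what must be excluded is that the connected group $G_x^\circ$ has any fixed point other than $x$; this is Hrushovski's lemma, proved by examining the fixed set of $G_x^\circ$ together with its normalizer. Granting $2$-transitivity, I would analyse $G$ structurally. If $G$ has a regular normal subgroup $A$, then $A$ is abelian (as $(G_x)^\circ$ acts freely on $A - \{1\}$), $A$ is $G_x$-minimal since $\rk A = 1$, and Zilber's Field Theorem applied to $A \rtimes G_x$ produces an algebraically closed field $\fieldF$ with $A \cong \fieldF^+$ and $(G_x)^\circ \cong \fieldF^\times$, whence $G \cong \agl_1(\fieldF)$: conclusion (2). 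Otherwise $G$ is almost simple, and a closer study of the point stabilizer --- recovering a field once more via Zilber's theorem, now inside the two-point stabilizer, which is a connected rank-$1$ group and hence (by the finite-stabilizer case) abelian and regular on its remaining points --- forces $(X,G) \cong (\proj^1(\fieldF),\psl_2(\fieldF))$: conclusion (3). Throughout one uses that the transitivity degree of a permutation group of finite Morley rank on a set of positive rank is at most its rank, which caps the depth of such a descent. The two places where the real work is concealed are the $2$-transitivity lemma and the extraction of the field via Zilber's theorem; the rest is bookkeeping with ranks and connectedness.
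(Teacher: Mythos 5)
This statement is a quoted background fact: the paper gives no proof of it at all, citing Hrushovski via \cite[Theorem~11.98]{BoNe94}, so there is no internal argument to compare yours against. Your sketch does track the standard proof from the literature in outline---reduce to $G$ connected, Reineke's theorem when the point stabilizer is finite, Hrushovski's $2$-transitivity lemma when it is infinite, and Zilber's field theorem for the identification---and you correctly flag where the real work lies. Two steps, however, are asserted more strongly than they can be justified as written. First, a nontrivial finite group of field automorphisms does not by itself make $\fieldF$ fail to have finite Morley rank (an algebraically closed field always has many automorphisms); the correct point is that the automorphisms induced by $G/G^\circ$ are \emph{definable}, and a nontrivial definable automorphism of finite order would exhibit, via Artin--Schreier, an infinite definable real-closed subfield, contradicting stability---this is exactly the lemma (\cite[Lemma~4.5]{ABC08}) that the authors themselves invoke at the end of the proof of Theorem~A. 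Second, the dichotomy ``$G$ has a regular normal subgroup or $G$ is almost simple'' is not available for free for $2$-transitive groups of finite Morley rank and is not how the identification actually runs: in \cite{BoNe94} one first bounds $\rk G\le 3$, then uses Cherlin's theorem that a connected group of rank $2$ is solvable (whence Zilber's field theorem yields $\agl_1(\fieldF)$ directly, with no prior appeal to a regular normal subgroup), and in rank $3$ one passes through the sharply $3$-transitive/Zassenhaus-group analysis to reach $\psl_2(\fieldF)$. Both gaps are repairable from the standard toolkit, but as written those are the two places where your sketch leans on statements that are not theorems.
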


\begin{fact}[\protect{\cite[Theorem~A]{AlWi15}}]\label{fact.rankTwoActions}
If $(X,G)$ is a transitive and generically $4$-transitive permutation group of fMr with $X$ of rank $2$, then
$(X,G)\cong(\proj^{2}(\fieldF),\pgl_{3}(\fieldF))$ for some algebraically closed field $\fieldF$.
\end{fact}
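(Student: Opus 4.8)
The plan is to deduce the statement from Theorem~\ref{thm.A}, which applies to any extremal group with $\rk X = 2$ once its two extra hypotheses are verified; the real content is therefore to establish, for a transitive generically $4$-transitive $(X,G)$ with $\rk X = 2$, both honest $2$-transitivity and the required projective quotient of the point stabilizer. First I would record the rank bookkeeping forced by generic $4$-transitivity. Working with connected components and using that generic transitivity singles out a unique irreducible component (so the relevant configuration spaces have Morley degree $1$), I fix a generic tuple $(x,y,z,w)$ in the large orbit $\orbit \subseteq X^4$ and peel off coordinates: intersecting $\orbit$ with the fibers $\{x\}\times X^3$, then $\{x,y\}\times X^2$, shows that $G_x$ is generically $3$-transitive on $X-\{x\}$ and $G_{x,y}$ generically $2$-transitive. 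In particular a generic triple $(x,y,z)$ spans a rank-$2$ orbit for its last coordinate, the algebraic shadow of ``three points in general position.'' These restrictions are immediate and cost nothing.

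The first genuine obstacle is to upgrade generic $2$-transitivity to honest $2$-transitivity. Generic $2$-transitivity yields a single orbit of rank $4$ on $X\times X$, whose complement has rank at most $3$; the diagonal contributes rank $2$, and the task is to exclude an intermediate orbit of distinct pairs of rank $3$. I would analyze the two-point stabilizer and use connectedness together with the homogeneity supplied by generic $3$-transitivity of $G_x$ on $X-\{x\}$ to show that every pair of distinct points is $G$-conjugate to a generic one. This generic-to-honest upgrade is the classically delicate step in this subject, and I expect it to absorb the bulk of the effort.

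With $2$-transitivity in hand, I would realize the pencil of lines through a point as a definable $G_x$-invariant equivalence relation on $X-\{x\}$: declare $y\sim_x z$ when the triple $(x,y,z)$ fails to be in general position, i.e.\ lies in the deficient locus where the last-coordinate orbit drops rank. The work is to prove that $\sim_x$ is a genuine equivalence relation---transitivity of ``collinear with $x$'' is exactly the assertion that these putative lines partition $X-\{x\}$---and that each class has rank $1$ while the set of classes has rank $1$; the rank count follows from the general-position stratification, and transitivity of $\sim_x$ is checked by a rank argument on a configuration of four points. The induced action of $G_x$ on the rank-$1$ quotient inherits generic $3$-transitivity from the action on $X-\{x\}$, since three points $y,z,w$ forming a generic triple off $x$ determine three pairwise distinct lines through $x$ (each sub-triple with $x$ being generic, hence non-collinear), and generic triples in the quotient lift to generic triples in $X-\{x\}$.

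Finally, I would identify the quotient. Among the three possibilities of Fact~\ref{fact.Hru} for a transitive action on a set of rank and degree $1$, neither the regular abelian action nor $(\fieldF,\agl_1(\fieldF))$ is generically $3$-transitive---the former is not even $2$-transitive, the latter only sharply $2$-transitive---so the generically $3$-transitive pencil must be $(\proj^1(\fieldF),\psl_2(\fieldF))$ for an algebraically closed field $\fieldF$. This is precisely projective over a field of rank $1$ with classes of infinite size, verifying hypothesis (2); together with the $2$-transitivity of hypothesis (1), Theorem~\ref{thm.A} applies and yields $(X,G)\cong(\proj^2(\fieldF),\pgl_3(\fieldF))$. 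To reiterate, the main obstacle is the honest $2$-transitivity upgrade, with the transitivity of $\sim_x$ a secondary difficulty; after these, the rank-$1$ recognition via Fact~\ref{fact.Hru} and the appeal to Theorem~\ref{thm.A} are comparatively routine.
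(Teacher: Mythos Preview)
Your proposal has a structural problem: Fact~\ref{fact.rankTwoActions} is not proved in this paper at all. It is quoted as an external result from \cite{AlWi15}, and the present paper \emph{uses} it as the base case in the proof of Theorem~\ref{thm.A}. Indeed, Section~\ref{sec.DefineGeometry} opens with ``By Facts~\ref{fact.Hru} and~\ref{fact.rankTwoActions}, Theorem~\ref{thm.A} holds when $X$ has rank $1$ or $2$, so we will assume throughout that $\rk X \ge 3$.'' Your plan to deduce Fact~\ref{fact.rankTwoActions} from Theorem~\ref{thm.A} is therefore circular: for $n=2$, the content of Theorem~\ref{thm.A} in this paper \emph{is} Fact~\ref{fact.rankTwoActions}. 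The subsequent machinery (Lemma~\ref{lem:UniqueSim}, the Veblen argument) is developed only for $n\ge 3$ and in places genuinely uses that inequality.

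Setting circularity aside, your outline does not actually establish the two hypotheses you need. You correctly identify the upgrade from generic $2$-transitivity to honest $2$-transitivity as ``the classically delicate step'' that will ``absorb the bulk of the effort,'' but you do not carry it out; the sentence about analyzing two-point stabilizers and using connectedness is not an argument. Likewise, you flag transitivity of $\sim_x$ as a ``secondary difficulty'' and propose to check it ``by a rank argument on a configuration of four points,'' again without supplying one. These are precisely the substantive parts of the rank-$2$ theorem in \cite{AlWi15}, and they require real work (including structural analysis of $G$) that your proposal does not contain. What you have written is, at best, a reduction of Fact~\ref{fact.rankTwoActions} to itself together with a sketch of how the pieces would fit if the hard steps were already known.
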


We now end this (very brief) section with some terminology that is frequently used in the sequel.

\begin{definition}
Let $(X,G)$ be a generically $n$-transitive permutation group of fMr. We say that $(x_1,\ldots,x_n) \in X^n$ 
is in \textdef{general position} if the orbit of $G$ on $X^n$ containing $(x_1,\ldots,x_n)$ is of maximal rank. The stabilizers of $n$-tuples in general position will be called \textdef{generic $n$-point stabilizers}.
\end{definition}

\begin{remark}
It is easy to see that if $(x_1,\ldots,x_n)\in X^n$ are in general position, then any permutation of the coordinates is as well (\cite[Lemma~4.8]{AlWi15}). Thus, we often just write $x_1,\ldots,x_n\in X$ are in general position.
\end{remark}

\section{Defining the geometry}\label{sec.DefineGeometry}
From the hypotheses of Theorem~\ref{thm.A}, we define a notion of collinearity for the elements of $X$ with respect to which we obtain a projective space. The goal of this section is simply the definition of collinearity and its connection to the given equivalence relation on $X-\{x\}$. 

By Facts~\ref{fact.Hru} and \ref{fact.rankTwoActions}, Theorem~\ref{thm.A} holds when $X$ has rank $1$ or $2$, so we will assume throughout that $\rk X \ge 3$. We adopt the following setup for the remainder of  this section.

\begin{setup}
Let $(X,G)$ satisfy all hypotheses of Theorem~\ref{thm.A}. Set $n := \rk X$, and assume $n\ge 3$. Fix $1\in X$. Define $X_1 := X-\{1\}$, and let $\sim_1$ be a fixed, definable $G_1$-invariant equivalence relation on $X_1$ such that 
\begin{itemize}
\item $\modu{X_1} := X_1/{\sim_1}$ contains classes of infinite size, and
\item $(\modu{X_1},\modu{G_1})$ is projective over a field of rank $1$, where $\modu{G_1}$ is the image of $G_1$ in $\sym(\modu{X_1})$.  
\end{itemize}
Finally, let $K_1$ be the kernel of the map $G_1 \rightarrow\modu{G_1}$.
\end{setup}

As we are assuming that $(X,G)$ is $2$-transitive, $(X_1,G_1)$ is transitive, so the $\sim_1$-classes have constant rank at least $1$. Thus, $1\le \rk \modu{X_1} \le n-1$. We are also assuming that $(\modu{X_1},\modu{G_1}) \cong (\proj^{m}(\fieldF),\pgl_{m+1}(\fieldF))$ for some algebraically closed field $\fieldF$ of rank $1$, and it is not hard to see that in fact $m = n-1$. Indeed, as $(X,G)$ is generically $(n+2)$-transitive, $(X_1,G_1)$ is generically $(n+1)$-transitive by general principles, so $(\modu{X_1},\modu{G_1})$ is also generically $(n+1)$-transitive, see \cite[Lemma~6.1]{BoCh08} or \cite[Lemma~4.17]{AlWi15}. This forces $m\ge n-1$, so as $m = \rk \modu{X_1} \le n-1$, we have   $m = n-1$. We record this observation in the following remark.

\begin{remark}\label{rem.RankLines}
We have that $(\modu{X_1},\modu{G_1})\cong (\proj^{n-1}(\fieldF),\pgl_{n}(\fieldF))$ for some algebraically closed field $\fieldF$ of rank $1$. Consequently, $\modu{X_1}$ has rank $n-1$, and every $\sim_1$-class has rank~$1$.
\end{remark}

We begin with a lemma describing the action of point-stabilizers in the quotient; it will be followed by a more concrete description of $\sim_1$. However, we first require a definition.

\begin{definition}
A generically $m$-transitive permutation group of fMr is said to be generically \textdef{sharply} $m$-transitive if the stabilizer of any $m$ points in general position is trivial.
\end{definition}

Note that $(\proj^{n-1}(\fieldF),\pgl_{n}(\fieldF))$ is generically sharply $(n+1)$-transitive. Also, if $(Y,H)$ is any generically sharply $m$-transitive permutation group of fMr, then there exists a definable bijection between $H$ and the large orbit $\orbit\subset Y^m$, implying that $\rk H = m\rk Y$.

Regarding notation, if $x\in X$, then $\overline{x}$ is the image of $x$ in $\modu{X_1}$, and by $G_{\overline{x}}$, we mean the setwise stabilizer of $\overline{x}$ in $G$. Then, $G_{1,\overline{x}} = G_1 \cap G_{\overline{x}}$, so $\modu{G_{1,\overline{x}}} = (\modu{G_1})_{\overline{x}}$, where the latter is the stabilizer in $\modu{G_1}$ of the ``point'' $\overline{x}$.

\begin{lemma}\label{lem.ImagePointStabilizersInQuotient}
If $2\le k\le n+1$ and $1,2,\ldots,k\in X$ are in general position, then $\modu{G_{1,2,\ldots,k}} = \modu{G_{1,\overline{2},\ldots,\overline{k}}}$.
\end{lemma}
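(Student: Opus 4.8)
The plan is to prove the two inclusions $\modu{G_{1,2,\ldots,k}} \subseteq \modu{G_{1,\overline 2,\ldots,\overline k}}$ and $\modu{G_{1,\overline 2,\ldots,\overline k}} \subseteq \modu{G_{1,2,\ldots,k}}$ separately, the first being essentially formal and the second requiring a rank computation that exploits generic sharp transitivity of the quotient. For the easy inclusion: if $g \in G_{1,2,\ldots,k}$ then $g$ fixes $1$ (so $g \in G_1$ and $\modu g$ makes sense) and $g$ fixes each $i$, hence $g$ fixes each $\overline i = \overline{g\cdot i}$, so $g \in G_{1,\overline 2,\ldots,\overline k}$ and thus $\modu g \in \modu{G_{1,\overline 2,\ldots,\overline k}}$. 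This gives $\modu{G_{1,2,\ldots,k}} \subseteq \modu{G_{1,\overline 2,\ldots,\overline k}}$ with no hypotheses beyond $1,\ldots,k \in X_1$.

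For the reverse inclusion, I would argue by a rank count, the point being that once we have fixed the classes $\overline 2,\ldots,\overline k$, stabilizing the actual points $2,\ldots,k$ as well costs ``nothing modulo $K_1$.'' Concretely: take $g \in G_1$ with $\modu g \in \modu{G_{1,\overline 2,\ldots,\overline k}}$, so $\modu g$ fixes $\overline 2,\ldots,\overline k$; I want to find $h \in G_{1,2,\ldots,k}$ with $\modu h = \modu g$, i.e.\ $h \in gK_1$. Equivalently, letting $H := G_{1,\overline 2,\ldots,\overline k}$ (the preimage in $G_1$ of $(\modu{G_1})_{\overline 2,\ldots,\overline k}$, which is a definable subgroup containing $K_1$), I must show that $H$ is transitive enough on the classes $\overline 2,\ldots,\overline k$ that some element of $H$ simultaneously sends some fixed base tuple in these classes to $(2,3,\ldots,k)$; more precisely that the $H$-orbit of $(2,3,\ldots,k) \in \overline 2 \times \cdots \times \overline k$ is all of $\overline 2 \times \cdots \times \overline k$, since then any $g \in H$ can be corrected by an element of $G_{1,2,\ldots,k} \subseteq H$ (wait---rather, the orbit-stabilizer argument directly gives $H = G_{1,2,\ldots,k}\cdot(\text{pointwise stabilizer in } H \text{ of the classes})$, and one checks the relevant index). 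The cleanest route: show $\rk H = \rk G_{1,2,\ldots,k} + \sum_{i=2}^{k}\rk(\overline i) = \rk G_{1,2,\ldots,k} + (k-1)$ (each class has rank $1$ by Remark~\ref{rem.RankLines}), and separately $\rk \modu H = \rk \modu{G_{1,2,\ldots,k}}$; combined with $\rk H = \rk \modu H + \rk K_1$ and $\rk G_{1,2,\ldots,k} = \rk \modu{G_{1,2,\ldots,k}} + \rk(K_1 \cap G_{2,\ldots,k})$, and a matching of Morley degrees, one forces $\modu{H_{2,\ldots,k}}$-image to fill out $\modu{G_{1,\overline 2,\ldots,\overline k}}$.

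Actually the slickest argument is the following. Since $1,2,\ldots,k$ are in general position in $X$, their images: first, $1,\overline 2,\ldots,\overline k$ give a tuple in $\modu{X_1}$, and since $(\modu{X_1},\modu{G_1}) \cong (\proj^{n-1}(\fieldF),\pgl_n(\fieldF))$ is generically sharply $(n+1)$-transitive with $k \le n+1$, the stabilizer $\modu{G_{1,\overline 2,\ldots,\overline k}}$ has rank exactly $(n+1-k)(n-1) = \rk\modu{G_1} - (k-1)(n-1)$ provided $\overline 2,\ldots,\overline k$ are themselves in general position in $\modu{X_1}$ — which follows because genericity is preserved under the quotient map (generic $(n+1)$-transitivity of $(\modu{X_1},\modu{G_1})$ was already noted, and the image of a general-position tuple lies in the unique large orbit, by the argument already given in the text for $m=n-1$). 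Now compare the two stabilizer chains: passing from $G_{1,2,\ldots,k-1}$ to $G_{1,2,\ldots,k}$ drops rank by $\rk X = n$ (general position in $X$), while passing from $\modu{G_{1,\overline 2,\ldots,\overline{k-1}}}$ to $\modu{G_{1,\overline 2,\ldots,\overline k}}$ drops rank by $\rk \modu{X_1} = n-1$. An induction on $k$, starting from the base case $k=2$ where $\modu{G_{1,2}} = \modu{G_{1,\overline 2}}$ must be checked by hand (here $G_{1,\overline 2}/G_{1,2}$ injects into the class $\overline 2$, so has rank $\le 1$, and one shows the image in $\modu{G_1}$ is unchanged because $K_1 \le G_{1,\overline 2}$ absorbs the discrepancy), then propagates the equality: given $\modu{G_{1,2,\ldots,k-1}} = \modu{G_{1,\overline 2,\ldots,\overline{k-1}}}$, the already-established inclusion $\modu{G_{1,2,\ldots,k}} \subseteq \modu{G_{1,\overline 2,\ldots,\overline k}}$ together with the matching rank drops (modulo checking the contribution of $K_1$ stays constant, which holds since $K_1 \le G_{1,\overline 2,\ldots,\overline k}$ for all $k$) and equality of Morley degrees forces $\modu{G_{1,2,\ldots,k}} = \modu{G_{1,\overline 2,\ldots,\overline k}}$.

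The main obstacle I anticipate is precisely the bookkeeping with $K_1$: one needs that $K_1$ contributes the \emph{same} amount of rank (and behaves uniformly in Morley degree) at each stage of the chain $G_1 \ge G_{1,2} \ge \cdots \ge G_{1,2,\ldots,k}$, i.e.\ that $\rk(K_1 \cap G_{2,\ldots,k})$ is what one expects. This is where generic sharp $(n+1)$-transitivity of the quotient does real work: it pins down $\rk \modu{G_{1,\overline 2,\ldots,\overline k}}$ exactly, and then the rank identity $\rk G_{1,2,\ldots,k} = \rk \modu{G_{1,2,\ldots,k}} + \rk(K_1 \cap G_{2,\ldots,k})$ combined with the known drop $\rk G_{1,2,\ldots,k-1} - \rk G_{1,2,\ldots,k} = n$ and the comparison with the drop $n-1$ downstairs forces $\rk(K_1 \cap G_{2,\ldots,k}) - \rk(K_1 \cap G_{2,\ldots,k-1}) = 1$ for each $k$ in range, keeping everything consistent. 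Care is also needed for $k = n+1$, the sharply-transitive extreme, where downstairs stabilizers become finite; but there the equality $\modu{G_{1,2,\ldots,n+1}} = \modu{G_{1,\overline 2,\ldots,\overline{n+1}}} = 1$ should drop out of the preceding case together with generic sharp $(n+1)$-transitivity of $(\modu{X_1},\modu{G_1})$.
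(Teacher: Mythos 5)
Your easy inclusion $\modu{G_{1,2,\ldots,k}} \subseteq \modu{G_{1,\overline{2},\ldots,\overline{k}}}$ is fine, but the reverse inclusion has a genuine gap: every inequality you can actually establish points the wrong way. Writing $A := G_{1,\ldots,k}$ and $B:=G_{1,\overline{2},\ldots,\overline{k}}$, generic sharp $(n+1)$-transitivity of $(\modu{X_1},\modu{G_1})$ pins down $\rk\modu{B}$, and the fact that each $\sim_1$-class has rank $1$ shows that stabilizing the point $k$ inside $K_1\cap G_{2,\ldots,k-1}$ costs rank \emph{at most} $1$; together these give only $\rk\modu{A}\le\rk\modu{B}$, which you already know from $\modu{A}\le\modu{B}$. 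To close the argument you need a \emph{lower} bound on $\rk\modu{A}$, equivalently an \emph{upper} bound on $\rk(K_1\cap G_{2,\ldots,k})$, and your proposal supplies neither: the claim that the rank identities ``force'' $\rk(K_1\cap G_{2,\ldots,k-1})-\rk(K_1\cap G_{2,\ldots,k})=1$ presupposes $\rk\modu{A}=\rk\modu{B}$, i.e.\ the conclusion. The same issue already appears in your base case $k=2$: the assertion that ``$K_1$ absorbs the discrepancy'' amounts to saying $K_1$ is transitive on the class $\overline{2}$, which is a \emph{consequence} of the lemma, not an available hypothesis.

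The missing idea is to apply the quotient lemma (\cite[Lemma~6.1]{BoCh08} or \cite[Lemma~4.17]{AlWi15}) not just to $G_1$ but to the subgroup $A$ itself: since $(X,G)$ is generically $(n+2)$-transitive and $1,\ldots,k$ are in general position, $A$ acts generically $(n+2-k)$-transitively on $X$, hence $\modu{A}$ acts generically $(n+2-k)$-transitively on $\modu{X_1}$. This is exactly the lower bound your rank count lacks, and in fact it finishes immediately without any $K_1$ bookkeeping: $\modu{B}$ is the stabilizer in $\pgl_n(\fieldF)$ of $k-1$ points of $\proj^{n-1}(\fieldF)$ and so acts generically \emph{sharply} $(n+2-k)$-transitively, and a generically $(n+2-k)$-transitive subgroup of a generically sharply $(n+2-k)$-transitive group must be the whole group. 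That is the paper's proof; your outline circles around its two ingredients (sharpness downstairs, comparison of the two stabilizer chains) but never imports the generic transitivity of $\modu{A}$ from upstairs, which is the step that makes everything else unnecessary.
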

\begin{proof}
Let $A:= G_{1,2,\ldots,k}$, and $B := G_{1,\overline{2},\ldots,\overline{k}}$. Since $(X,G)$ is generically $(n+2)$-transitive, we find that $A$ acts generically $(n+2-k)$-transitively on $X$, so $\modu{A}$ acts generically $(n+2-k)$-transitively on $\modu{X_1}$ as well (see \cite[Lemma~6.1]{BoCh08} or \cite[Lemma~4.17]{AlWi15}). Now, we know the action of $\modu{B}$ on $\modu{X_1}$; it is acting as the stabilizer in $\pgl_{n}$ of $(k-1)$ points of projective $(n-1)$-space. Thus, $\modu{B}$ acts generically \emph{sharply} $(n+2-k)$-transitively on $\modu{X_1}$. Since $\modu{A}\le \modu{B}$, we find that $\modu{A}=\modu{B}$.
\end{proof}

\begin{lemma}\label{lem:UniqueSim}
Let $2\in X_1$. Then every $G_{1,2}$-orbit on $X$ of rank at most $1$ belongs to $\overline{2}\cup\{1\}$.
\end{lemma}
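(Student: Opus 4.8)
The plan is to argue the contrapositive. First I would note that $\overline{2}\cup\{1\}$ is $G_{1,2}$-invariant: since $\sim_1$ is $G_1$-invariant and $G_{1,2}\le G_1$ fixes $2$, the group $G_{1,2}$ stabilizes the class $\overline 2$ setwise, and it fixes $1$. Hence every $G_{1,2}$-orbit either lies inside $\overline 2\cup\{1\}$ or is disjoint from it, and in the latter case the orbit is contained in $X_1$ and contains some $3$ with $\overline 3\ne\overline 2$. It then suffices to show that any such orbit has rank at least $n-1\ge 2$.

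Next I would pass to the quotient $(\modu{X_1},\modu{G_1})$. Because $(X,G)$ is $2$-transitive, the set of distinct pairs from $X$ — the complement of the diagonal, hence of rank $2n$ — is a single $G$-orbit and therefore the generic one, so $1,2$ are in general position. Lemma~\ref{lem.ImagePointStabilizersInQuotient} with $k=2$ then gives $\modu{G_{1,2}} = \modu{G_{1,\overline 2}} = (\modu{G_1})_{\overline 2}$. Under the identification $(\modu{X_1},\modu{G_1})\cong(\proj^{n-1}(\fieldF),\pgl_n(\fieldF))$ of Remark~\ref{rem.RankLines}, $(\modu{G_1})_{\overline 2}$ is a point stabilizer of $\pgl_n(\fieldF)$ on $\proj^{n-1}(\fieldF)$, and such a stabilizer acts transitively on the complement of its fixed point (standard linear algebra), a set of rank $n-1$.

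Finally, the image of the orbit $\orbit=G_{1,2}\cdot 3$ under the quotient map $X_1\to\modu{X_1}$ is $\modu{G_{1,2}}\cdot\overline 3 = \modu{X_1}\setminus\{\overline 2\}$, which has rank $n-1$; since this map restricts to a definable surjection from $\orbit$ onto that set, $\rk\orbit\ge n-1\ge 2$ (using $n\ge 3$), so $\orbit$ cannot have rank at most $1$. This yields the lemma.

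I do not anticipate a genuine obstacle: once Lemma~\ref{lem.ImagePointStabilizersInQuotient} is available, the statement is essentially a readout of the orbit structure of a point stabilizer of $\pgl_n(\fieldF)$ on $\proj^{n-1}(\fieldF)$. The only steps that require care are verifying that $1,2$ are in general position — this is precisely where the $2$-transitivity hypothesis is used, and it is needed for Lemma~\ref{lem.ImagePointStabilizersInQuotient} to apply with $k=2$ — and keeping the rank bookkeeping through the quotient map straight; it is also worth remembering that the hypothesis $n\ge 3$, i.e.\ $\rk\modu{X_1}\ge 2$, is exactly what makes the conclusion nonvacuous.
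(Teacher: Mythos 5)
Your proof is correct and follows essentially the same route as the paper's: both reduce to showing that an orbit outside $\overline{2}\cup\{1\}$ has large rank by passing to the quotient, invoking Lemma~\ref{lem.ImagePointStabilizersInQuotient} (via $2$-transitivity to put $1,2$ in general position) to identify $\modu{G_{1,2}}$ with the point stabilizer $(\modu{G_1})_{\overline{2}}$, and then using transitivity of that stabilizer on the complement of $\overline{2}$ to get rank $n-1\ge 2$. The only cosmetic difference is that you argue the contrapositive via the image of the orbit under the quotient map, while the paper phrases the same estimate directly in terms of the orbit of $\modu{G_{1,2}}$ on $\modu{x}$.
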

\begin{proof}
Set $A:=\overline{2}\cup\{1\}$. Since $A$ is $G_{1,2}$-invariant, the fact that $A$ has rank $1$ (by Remark~\ref{rem.RankLines}) implies that every element of $A$ is contained in a $G_{1,2}$-orbit of rank at most $1$. Now assume  $x\notin A$. We want to show that the orbit of $G_{1,2}$ on $x$ has rank at least $2$. By $2$-transitivity of $(X,G)$, $1$ and $2$ are in general position. By Lemma~\ref{lem.ImagePointStabilizersInQuotient},  $\modu{G_{1,2}} = \modu{G_{1,\overline{2}}}$, so as $(\modu{X_1},\modu{G_1})$ is $2$-transitive, the orbit of $\modu{G_{1,2}}$ on $\modu{x}$ has rank $n-1$. Since we are assuming that $n\ge 3$, the orbit of $\modu{G_{1,2}}$ on $\modu{x}$ is at least $2$, so the same must be true of $G_{1,2}$ on $x$.
\end{proof}

We now define the lines of our geometry. The connection with $\sim_1$ will be made explicit below in Proposition~\ref{prop.LinesAndSim}.

\begin{definition}
Define a relation $\lambda$ on $X^3$ by $\lambda(a,b,c)$ if and only if the orbit of $G$ on the triple $(a,b,c)$ has rank at most $2n+1$. 
\begin{itemize}
\item We read $\lambda(a,b,c)$ as ``$a$, $b$, $c$  are \textdef{collinear}.'' 
\item If $a$, $b$, $c$  are not collinear, we say they form a \textdef{triangle}.
\item For $a\neq b\in X$, define $\li{ab}:= \lambda(a,b,X)$.
\end{itemize}
A \textdef{line} will be any subset of $X$ of the form $\li{ab}$ with $a\neq b\in X$. The set of lines is denoted $\lines$, and the set of lines through $a$ is  $\lines(a) := \{\ell_{ab}: b\in X -\{a\}\}$.
\end{definition}

\begin{remark}\hfill
\begin{enumerate}
\item The orbits of $G$ on $X^3$ are uniformly definable, as are their ranks, so $\lambda$ is definable.
\item Note that triples with repeated entries are always collinear since the orbit of $G$ on such a triple will have rank at most $2n$. Thus, triples forming a triangle must not have repeated entries.
\item  As $(X,G)$ is generically $3$-transitive, any three points in general position form a triangle, but, a priori, there is no reason to believe that the points forming a triangle are necessarily in general position. That is, at this point, there may be different types of triangles; however, Lemma~\ref{lem.TrianglesAreInGP} will rule this out.
\item Note that $\li{ab} = \li{ba}$.
\end{enumerate}
\end{remark}

\begin{proposition}\label{prop.LinesAndSim}
If $2\in X_1$, then $\li{12} = \overline{2}\cup\{1\}$; thus $(\lines(1),\modu{G_1}) \cong (\modu{X_1},\modu{G_1})$.
\end{proposition}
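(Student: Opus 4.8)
The goal is to show $\li{12} = \overline{2}\cup\{1\}$ for any $2 \in X_1$; the isomorphism $(\lines(1),\modu{G_1}) \cong (\modu{X_1},\modu{G_1})$ then follows formally by sending $\li{1b}$ to $\overline{b}$ — this map is well-defined and injective exactly because $\li{1b} = \overline{b}\cup\{1\}$ and the $\sim_1$-classes partition $X_1$, and it is $G_1$-equivariant since everything in sight is $G$-definable. So the content is the set equality. I would prove it by a double inclusion, using the rank bound defining $\lambda$ together with Lemma~\ref{lem:UniqueSim} and the known structure of $(\modu{X_1},\modu{G_1})$.

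**Inclusion $\overline{2}\cup\{1\} \subseteq \li{12}$.** Here I must check $\lambda(1,2,c)$ for every $c \in \overline{2}\cup\{1\}$, i.e.\ that the $G$-orbit of $(1,2,c)$ has rank at most $2n+1$. If $c \in \{1,2\}$ the triple has a repeated entry and its orbit has rank at most $2n$, so we may assume $c \in \overline{2}$, $c \neq 2$. The orbit of $(1,2,c)$ maps onto the orbit of $(1,\overline{2},\overline{c}) = (1,\overline{2},\overline{2})$ under the quotient, and the fibers are controlled by the orbit of $G_{1,\overline{2}}$ on $c$, which lives inside the single $\sim_1$-class $\overline{2}$ of rank $1$. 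Concretely: $\rk(\text{orbit of }(1,2,c)) \le \rk(\text{orbit of }(1,2)) + \rk(\text{orbit of }G_{1,2}\text{ on }c)$; by $2$-transitivity the first term is $2n$, and by Lemma~\ref{lem:UniqueSim} (since $c \in \overline{2}\cup\{1\}$) the second term is at most $1$. Hence the orbit has rank at most $2n+1$, giving $\lambda(1,2,c)$.

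**Inclusion $\li{12} \subseteq \overline{2}\cup\{1\}$.** Suppose $c \notin \overline{2}\cup\{1\}$; I must show $\lambda(1,2,c)$ fails, i.e.\ the orbit of $(1,2,c)$ has rank $\ge 2n+2$. The orbit of $G$ on $(1,2,c)$ surjects $G$-equivariantly onto the orbit of $G$ on $(1,2)$ (rank $2n$ by $2$-transitivity) with fiber over $(1,2)$ the orbit of $G_{1,2}$ on $c$; so the orbit rank equals $2n + \rk(G_{1,2}\text{-orbit of }c)$. By Lemma~\ref{lem:UniqueSim}, since $c \notin \overline{2}\cup\{1\}$, that $G_{1,2}$-orbit has rank at least $2$, so the total rank is at least $2n+2 > 2n+1$, and $\lambda(1,2,c)$ fails. (The additivity $\rk(\text{orbit of }(1,2,c)) = \rk(\text{orbit of }(1,2)) + \rk(G_{1,2}\text{-orbit of }c)$ is the standard rank formula for a definable $G$-map with equidimensional fibers over a single orbit; I would cite the relevant background lemma for permutation groups of fMr.)

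**Main obstacle.** The only delicate point is bookkeeping with the rank-additivity formula: one wants $\rk(\orbit_G(1,2,c)) = 2n + \rk(\orbit_{G_{1,2}}(c))$ exactly, which requires that the projection $\orbit_G(1,2,c) \to \orbit_G(1,2)$ have all fibers of equal rank — true here because $G$ acts transitively on $\orbit_G(1,2)$ and carries fibers to fibers, so every fiber is a translate of $\orbit_{G_{1,2}}(c)$. With that in hand both inclusions are immediate from Lemma~\ref{lem:UniqueSim}, which is precisely why that lemma was proved first. The passage to the isomorphism of permutation groups is then purely formal, so I do not expect any difficulty there beyond noting definability and equivariance of $\li{1b} \mapsto \overline{b}$.
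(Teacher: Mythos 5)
Your proof is correct and follows essentially the same route as the paper: both reduce $\lambda(1,2,c)$ to the condition $\rk(cG_{1,2})\le 1$ via the rank-$2n$ orbit of the pair $(1,2)$, and then invoke Lemma~\ref{lem:UniqueSim} (together with the trivial observation, noted in that lemma's proof, that $\overline{2}\cup\{1\}$ is a $G_{1,2}$-invariant set of rank $1$). Your version merely makes the fibration/rank-additivity bookkeeping explicit where the paper leaves it implicit.
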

\begin{proof}
By $2$-transitivity of $(X,G)$, the orbit of $G$ on the pair $(1,2)$ has rank $2n$. Thus, the points collinear with $1$ and $2$ are precisely those $c$ for which the orbit $cG_{1,2}$ has rank at most $1$, so by Lemma~\ref{lem:UniqueSim}, $\li{12} = \overline{2}\cup\{1\}$.
\end{proof}

Of course, the element $1\in X$ is arbitrary; this leads to the following corollary, which also incorporates Remark~\ref{rem.RankLines}.

\begin{corollary}\label{cor.UniqueEqRel}
For all $x\in X$, the relation on $X_x := X-\{x\}$ given by $y\sim z$ if and only if $\li{xy} = \li{xz}$ is an equivalence relation, and  if $K$ is the kernel of the action of $G_x$ on $\lines(x)$, then $(\lines(x),G_x/K) \cong  (\proj^{n-1}(\fieldF),\pgl_{n}(\fieldF))$.
\end{corollary}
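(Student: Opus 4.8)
The plan is to reduce everything to Proposition~\ref{prop.LinesAndSim} by exploiting the transitivity of $(X,G)$. For the first assertion, fix $x \in X$ and any $y \neq z$ in $X_x$. I would first argue that the relation "$\li{xy} = \li{xz}$" is indeed an equivalence relation: reflexivity and symmetry are immediate from $\li{ab} = \li{ba}$ and the definitions, and transitivity follows once we know that distinct lines through $x$ meet only in $x$ — but this is not yet available in full generality, so instead I would route through the concrete description. Since $(X,G)$ is $2$-transitive, there exists $g \in G$ with $g\cdot 1 = x$; then $g$ conjugates the Setup data at $1$ to analogous data at $x$, i.e.\ $\sim_x := \,{}^g(\sim_1)$ is a definable $G_x$-invariant equivalence relation on $X_x$ with infinite classes such that $(X_x/{\sim_x}, \modu{G_x})$ is projective over a field of rank $1$. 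Applying Proposition~\ref{prop.LinesAndSim} with the roles of $1$ and this transported data (the proposition's proof used only the Setup hypotheses, which are $G$-conjugation invariant), we get $\li{xy} = \overline{y}^{\,x} \cup \{x\}$ for every $y \in X_x$, where $\overline{y}^{\,x}$ denotes the $\sim_x$-class of $y$. Hence $\li{xy} = \li{xz}$ if and only if $y \sim_x z$, so the relation in the statement literally coincides with $\sim_x$ and is therefore an equivalence relation; moreover its class space is exactly $\lines(x)$.

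Next, for the isomorphism statement, Proposition~\ref{prop.LinesAndSim} already gives $(\lines(x),\modu{G_x}) \cong (X_x/{\sim_x},\modu{G_x})$ as permutation groups, where $\modu{G_x}$ is the image of $G_x$ in $\sym(\lines(x))$. By construction of $\sim_x$ and Remark~\ref{rem.RankLines} (transported via $g$), $(X_x/{\sim_x},\modu{G_x}) \cong (\proj^{n-1}(\fieldF),\pgl_n(\fieldF))$ for an algebraically closed field $\fieldF$ of rank $1$. Finally I would identify $\modu{G_x}$ with $G_x/K$: by definition $K$ is the kernel of the action of $G_x$ on $\lines(x)$, which is precisely the kernel of the map $G_x \to \sym(\lines(x))$ whose image is $\modu{G_x}$, so $G_x/K \cong \modu{G_x}$ canonically and the permutation group isomorphism descends. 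Chaining these identifications yields $(\lines(x),G_x/K) \cong (\proj^{n-1}(\fieldF),\pgl_n(\fieldF))$.

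The only real subtlety — and the step I would be most careful about — is the transport argument: I must check that every hypothesis feeding into Lemmas~\ref{lem.ImagePointStabilizersInQuotient}, \ref{lem:UniqueSim}, and Proposition~\ref{prop.LinesAndSim} is invariant under replacing the marked point $1$ by an arbitrary $x = g\cdot 1$ and the equivalence relation $\sim_1$ by ${}^g(\sim_1)$. Since $(X,G)$ is $2$-transitive and generically $(n+2)$-transitive (both $G$-invariant properties), and since $\sim_x$ is obtained by applying the automorphism $g$ of the whole permutation group, all the relevant ranks, orbits, and quotient structures are preserved verbatim; in particular $(X_x/{\sim_x},\modu{G_x})$ is again projective over a field of rank $1$ with infinite classes. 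One should also note that the field $\fieldF$ arising at $x$ is (definably) isomorphic to the one at $1$, though for the statement as written only its existence and rank matter. Everything else is bookkeeping, so I do not anticipate a genuine obstacle here — the content has already been extracted in Proposition~\ref{prop.LinesAndSim}, and this corollary is the routine "spread it around by transitivity" consequence.
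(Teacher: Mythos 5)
Your proof is correct and follows exactly the route the paper intends: the paper gives no explicit proof of this corollary, simply remarking that the basepoint $1$ was arbitrary and that the statement incorporates Remark~\ref{rem.RankLines}, which is precisely the conjugation/transport argument you spell out. Your care in checking that the Setup hypotheses and the ($G$-invariant) definition of $\li{ab}$ transport under $g$ with $g\cdot 1 = x$ is exactly the content being elided.
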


\section{Analyzing the geometry}
We now begin to work out the details of our geometry. We start by recalling the axioms for a projective space. 

\begin{definition}
A geometry consisting of points, lines, and an incidence relation is called a \textdef{projective space} provided
\begin{enumerate}
\item\label{def.PS.axiom1} any two distinct points line on a unique line;
\item (Veblen's Axiom) if $a,b,c,d$ are distinct points and the line through $a$ and $b$ meets the line through $c$ and $d$, then the line through $b$ and $c$ meets the line through $a$ and $d$; and
\item there exist four points, no three of which are collinear.
\end{enumerate}
\end{definition}

\begin{setup}
Let $(X,G)$ satisfy the hypotheses of Theorem~\ref{thm.A} with $n := \rk X$. Set $\points := X$ and $\lines := \{\li{ab}: a\neq b\in X\}$ (as defined in the previous section). 
\end{setup}

The goals of this section are to show that two distinct points lie on a unique line (Axiom~\ref{def.PS.axiom1} above) and to prepare for the proof of Veblen's Axiom. 

\begin{proposition}[Two points determine a unique line]
Let $a,b\in \points$ be distinct. Then $\li{ab}$ is the \emph{unique} line containing $a$ and $b$. Hence, any two lines intersect in at most $1$ point. 
\end{proposition}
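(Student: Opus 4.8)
The plan is to reduce everything to a ``re-anchoring'' principle for lines, obtained by making Proposition~\ref{prop.LinesAndSim} uniform in its base point. Since triples with a repeated entry are collinear, every $\li{cd}$ (with $c\neq d$) passes through~$c$; and since $(X,G)$ is transitive and $\lambda$ is $G$-invariant (so that $g\li{ab}=\li{g(a)g(b)}$), conjugating the fixed point $1$ to~$c$ turns Proposition~\ref{prop.LinesAndSim} into the statement that $\li{cd}$ is $\{c\}$ together with the $\sim_c$-class of~$d$, with $\sim_c$ as in Corollary~\ref{cor.UniqueEqRel}. The consequence I actually use is: \emph{if $c\neq d$ and $w\in\li{cd}-\{c\}$, then $\li{cw}=\li{cd}$} --- for then $w$ lies in the $\sim_c$-class of $d$, which therefore equals the $\sim_c$-class of $w$, so both lines equal $\{c\}$ adjoined to this common class.

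Granting this, I would argue as follows. That $\li{ab}$ is a line through $a$ and $b$ is immediate. Conversely, let $\ell$ be any line through $a$ and $b$; by definition $\ell=\li{cd}$ for some $c\neq d$. If $c\in\{a,b\}$ --- say $c=a$, the case $c=b$ being the same after interchanging $a$ and $b$ --- then $\ell=\li{ad}$ and $b\in\ell-\{a\}$, so re-anchoring at $a$ gives $\li{ab}=\li{ad}=\ell$. If instead $c\notin\{a,b\}$, then $a\in\li{cd}-\{c\}$, so re-anchoring at $c$ gives $\li{ca}=\li{cd}=\ell$; now $b\in\ell=\li{ca}=\li{ac}$ with $b\neq a$, so re-anchoring at $a$ gives $\li{ab}=\li{ac}=\ell$. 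In every case $\ell=\li{ab}$, which proves uniqueness; and then if two lines share the distinct points $a,b$ they both equal $\li{ab}$, so distinct lines meet in at most one point.

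I do not expect a serious obstacle here: the real work has already been carried out in Proposition~\ref{prop.LinesAndSim} and Corollary~\ref{cor.UniqueEqRel}. The one point needing care is the passage from the distinguished base point to an arbitrary one, i.e.\ checking that the description of $\li{cd}$ as $\{c\}$ plus the $\sim_c$-class of $d$ is conjugation-invariant --- this is precisely where transitivity of $(X,G)$ is used (the $2$-transitivity having already been spent on Proposition~\ref{prop.LinesAndSim}) --- together with doing the re-anchoring bookkeeping in the right order, so that at each step one of the two ``defining'' points of the line already lies on the target line.
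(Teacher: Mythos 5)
Your proposal is correct and follows essentially the same route as the paper: both arguments rest on the observation (from Proposition~\ref{prop.LinesAndSim} and Corollary~\ref{cor.UniqueEqRel}, transported to an arbitrary base point by transitivity) that $\li{uv}-\{u\}$ is a class of an equivalence relation on $X-\{u\}$, hence $w\in\li{uv}-\{u\}$ forces $\li{uw}=\li{uv}$, and then both perform the same two-step re-anchoring to identify an arbitrary line through $a$ and $b$ with $\li{ab}$. Your explicit attention to the conjugation-invariance of the description of $\li{cd}$ is a fair point of care, though the paper absorbs it into the ``for all $x$'' formulation of Corollary~\ref{cor.UniqueEqRel}.
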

\begin{proof}
First, let $u,v,w\in \points$ with $u\neq v$ and $u\neq w$. Since, by Corollary~\ref{cor.UniqueEqRel} the sets $\li{uv} - \{u\}$ and $\li{uw} - \{u\}$ are classes of an equivalence relation, we have that if $w\in \li{uv}$ and $w\neq u$, then $\li{uv} = \li{uw}$. 

Now assume that $a,b \in \li{pq}$ with $p$ and $q$ distinct elements of $\points$. We may assume that $a\neq p$, so by the previous observation, $\li{pq} = \li{pa} = \li{ap}$. Now $b\in \li{ap}$ (since $\li{ap} = \li{pq}$) and $b\neq a$, so again using the above observation, $\li{ap} = \li{ab}$. Thus,  $\li{pq} = \li{ab}$.
\end{proof}

We now begin to investigate the structure of $G$. Perhaps surprisingly, we already have enough information to compute its rank.  

\begin{corollary}\label{cor.RankG}
The action of $G$ on $X$ is generically sharply $(n+2)$-transitive, and consequently, $G$ is connected of rank $n(n+2)$. Moreover, all generic $k$-point-stabilizers are connected for $k\le n+2$.
\end{corollary}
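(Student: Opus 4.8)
The plan is to bootstrap the generic sharp $(n+1)$-transitivity of the quotient $(\modu{X_1},\modu{G_1})$ up to generic sharp $(n+2)$-transitivity of $(X,G)$. Start with points $1,2,\ldots,n+2 \in X$ in general position, and set $S := G_{1,2,\ldots,n+2}$. The goal is to show $S = 1$. Since $2,\ldots,n+2$ remain in general position for the action of $G_1$ on $X_1$ (which is generically $(n+1)$-transitive by the argument preceding Remark~\ref{rem.RankLines}), and since the images $\modu{2},\ldots,\modu{n+2}$ are then $n+1$ points in general position in $\modu{X_1} \cong \proj^{n-1}(\fieldF)$, Lemma~\ref{lem.ImagePointStabilizersInQuotient} gives $\modu{S} = \modu{G_{1,\modu{2},\ldots,\modu{n+2}}}$, and the right-hand side is trivial because $(\proj^{n-1}(\fieldF),\pgl_n(\fieldF))$ is generically sharply $(n+1)$-transitive. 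Hence $S \le K_1$, the kernel of $G_1 \to \modu{G_1}$. So it remains to show that this kernel, further stabilizing the points $2,\ldots,n+2$, is trivial.

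The key point is that $K_1$ acts on $X_1$ preserving every $\sim_1$-class setwise, i.e. every line through $1$ is $K_1$-invariant. By Proposition~\ref{prop.LinesAndSim}, the line $\li{1j} = \modu{j}\cup\{1\}$ for each $j$, and since $1,2,\ldots,n+1$ are in general position in $X$, the lines $\li{12},\ldots,\li{1,n+1}$ correspond to $n$ points of $\proj^{n-1}(\fieldF)$ in general position, so in particular they span; the point $n+2$ lies off all of them but its image $\modu{n+2}$ is determined. Rather than chase this directly, I would argue by symmetry: since $1$ was an arbitrary point of $X$ and the whole setup is available at each point (Corollary~\ref{cor.UniqueEqRel}), the element $s \in S$ also lies in the analogous kernel $K_2$ at the point $2$, hence in $K_1 \cap K_2$. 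An element of $K_1 \cap K_2$ fixes setwise every line through $1$ and every line through $2$. Given any point $x \in X$ distinct from $1$ and $2$ and not on $\li{12}$, the lines $\li{1x}$ and $\li{2x}$ meet only in $x$ (two distinct lines meet in at most one point, by the previous proposition), so $s$ fixes $x$; and points on $\li{12}$ are handled by bringing in a third base point (or noting $n \ge 3$ gives room to route around). Thus $s$ fixes $X$ pointwise, so $s = 1$ and $S = 1$.

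With generic sharp $(n+2)$-transitivity established, the rank computation is immediate from the remark in the excerpt: for a generically sharply $m$-transitive action there is a definable bijection between $G$ and the large orbit $\orbit \subset X^m$, whence $\rk G = m \cdot \rk X = (n+2)n$. Connectedness of $G$ follows because $\orbit$, being the unique orbit of maximal rank, is $G^\circ$-invariant and of the same rank as $G$; equivalently $G^\circ$ is already generically $(n+2)$-transitive, so $|G : G^\circ|$ acts faithfully and trivially on $\orbit$ (using sharpness), forcing $G = G^\circ$. For the generic $k$-point-stabilizers with $k \le n+2$: such a stabilizer $G_{x_1,\ldots,x_k}$ acts generically sharply $(n+2-k)$-transitively on $X$ (restrict the large orbit), so the same bijection argument shows it is definably bijective with the large orbit of that residual action, which is irreducible of maximal rank and hence connected as a definable set — more directly, $\rk G_{x_1,\ldots,x_k} = (n+2-k)n$ and one checks the chain $G \ge G_{x_1} \ge \cdots \ge G_{x_1,\ldots,x_k}$ drops rank by exactly $n$ at each step (each $x_{i+1}$ is in general position over the previous ones), which combined with connectedness of $G$ and transitivity of each stabilizer on the relevant generic orbit propagates connectedness down the chain.

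The main obstacle I anticipate is the middle step: showing the kernel $K_1$ of the quotient map, once it fixes enough points, is actually trivial. The subtlety is that $K_1$ could a priori act nontrivially within the $\sim_1$-classes (the rank-$1$ lines through $1$), and one must rule this out purely geometrically — the symmetry trick of passing to $K_1 \cap K_2$ and using that two distinct lines meet in at most one point is the cleanest route, but one must be careful about the points lying on the single line $\li{12}$, which requires invoking a third base point and the hypothesis $n \ge 3$ to guarantee such a point is available in general position.
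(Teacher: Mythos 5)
Your proof follows essentially the same route as the paper's: reduce the generic $(n+2)$-point stabilizer into $K_1\cap K_2$ via the sharp generic $(n+1)$-transitivity of the quotient action, kill $K_1\cap K_2$ using that two distinct lines meet in at most one point, and then obtain the rank and connectedness from the definable bijection with the large orbit. The only notable difference is that you explicitly flag the points lying on $\li{12}$ (a case the paper's phrase about the ``unique point of intersection of $\li{1z}$ and $\li{2z}$'' quietly elides), and your sketched fix---routing through a point off $\li{12}$---does work.
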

\begin{proof}
Let $1,\ldots,n+2 \in \points$ be in general position.  Let $K_1$ be the kernel of the action of $G_1$ on $\lines(1)$, and similarly define $K_2$. (So $K_1$ fixes each line through $1$ setwise, and similarly for $K_2$.) The key observation is that any point $z\in\points-\{1,2\}$ is the \emph{unique} point of intersection of $\li{1z}$ and $\li{2z}$, so $z$ is fixed by $K_1\cap K_2$. As the action of $G$ on $\points$ is faithful, $K_1\cap K_2 = \{1\}$.

By \cite[Lemma~4.17]{AlWi15}, $(\ell_{12},\ell_{13},\ldots,\ell_{1(n+2)})$ is a generic tuple in $\lines(1)^{n+1}$, so as $(\lines(1),\modu{G_1}) \cong (\proj^{n-1}(\fieldF),\pgl_{n}(\fieldF))$, the stabilizer in $\modu{G_1}$ of the tuple is trivial. Of course, $G_{1,2,\ldots,n+2}$ fixes each of the lines $\ell_{12},\ell_{13},\ldots,\ell_{1(n+2)}$, so $G_{1,2,\ldots,n+2} \le K_1$. Similarly, we find that $G_{1,2,\ldots,n+2} \le K_2$, so $G_{1,2,\ldots,n+2} = 1$. 

We now have that the action of $G$ on $X$ is generically sharply $(n+2)$-transitive, so there is a definable \emph{bijection} between $G$ and the orbit  $\orbit := (1,\ldots,n+2 )\cdot G$ in $X^n$. This shows that $\rk G = n(n+2)$. Also, since $(X,G)$ is generically $2$-transitive, $G^\circ$ has a single orbit on $X$, so $X$ has degree $1$. This implies that $X^n$, hence $\orbit$, also has degree $1$, so as $G$ is in definable bijection with $\orbit$, $G$ is connected. The connectedness of the generic $k$-point-stabilizers follows by a similar argument.
\end{proof}

The next lemma, in conjunction with Fact~\ref{fact.Hru}, says that the group induced by $G$ on any line is of the form $\agl_1$ or $\psl_2$. 

\begin{lemma}
If $\ell\in \lines$, then $G_\ell$ acts $2$-transitively on $\ell$, and consequently, $\ell$ has degree $1$.
\end{lemma}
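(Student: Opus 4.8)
The plan is to prove $2$-transitivity of $G_\ell$ on $\ell$ by exploiting the isomorphism $(\lines(x),G_x/K)\cong(\proj^{n-1}(\fieldF),\pgl_n(\fieldF))$ from Corollary~\ref{cor.UniqueEqRel}, together with the structure of $\pgl_n$ acting on projective space. Fix a line $\ell$, and pick a point $x\in\ell$; then $\ell-\{x\}$ is one of the $\sim$-classes on $X_x$, i.e.\ a ``point'' $p$ of the projective geometry $\lines(x)$. The stabilizer of $\ell$ inside $G_x$ maps onto the stabilizer of $p$ in $\pgl_n(\fieldF)$, which is a maximal parabolic and in particular acts transitively on $\proj^{n-1}(\fieldF)-\{p\}$, hence transitively on the other points of $\lines(x)$, i.e.\ on the other lines through $x$. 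That is not immediately what we want (we want transitivity on pairs of points of $\ell$), so the key move is to pass between the two roles a point of $\ell$ can play: as an element of $\points$ and as determining, together with $x$, the line $\ell$.

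Here is the sequence of steps I would carry out. First, note $G_\ell$ acts transitively on $\ell$: given $x\neq y$ in $\ell$, since $(X,G)$ is $2$-transitive there is $g\in G$ with $xg=y$, and then $\ell g$ is a line through $y$; but we need $\ell g=\ell$. To fix this, argue instead inside $G_x$: the stabilizer in $G_x$ of the line $\ell$ (equivalently of the ``point'' $p=\ell-\{x\}$ of $\lines(x)$) induces on $\ell-\{x\}$, via the identification of $\ell-\{x\}$ with a $\sim$-class, an action that I claim is transitive — this should follow because the pointwise-on-$\lines(x)$ kernel $K$ does \emph{not} act trivially on $\ell-\{x\}$ (otherwise $K$ would fix $\ell-\{x\}$ pointwise, and running this over two choices of $x$ as in the proof of Corollary~\ref{cor.RankG} would force $K$ small), and more precisely because $G_{x,\ell}$ contains $G_{x,y}$ for $y\in\ell-\{x\}$ and one computes ranks. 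Concretely: $\rk \ell=1$, and I want the orbit of $G_{x,\ell}$ on a generic $y\in\ell-\{x\}$ to have rank $1$. By Corollary~\ref{cor.RankG}, $\rk G=n(n+2)$; the stabilizer $G_x$ has rank $n(n+1)$; the stabilizer $G_{x,\ell}=G_{x,p}$ of a point $p$ of $\lines(x)$ has rank $n(n+1)-(n-1)=n^2+1$ (since $p$ lies in a generic orbit of rank $n-1$); and $G_{x,y}$ for generic $y$ has rank $n(n+1)-n=n^2$. So $\rk G_{x,\ell}-\rk G_{x,y}=1=\rk(\ell-\{x\})$, giving that $G_{x,\ell}$ has a rank-$1$, hence generic, orbit on $\ell-\{x\}$. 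Then a short argument (all the relevant stabilizers are connected by Corollary~\ref{cor.RankG}, and $\ell$ has a unique generic orbit-candidate) upgrades this to transitivity of $G_{x,\ell}$ on $\ell-\{x\}$; combined with transitivity of $G_\ell$ on $\ell$ — which I would get by a similar rank count, or by observing that moving $x$ around inside $\ell$ via $G_{x,\ell}$ already nearly suffices — this yields $2$-transitivity of $G_\ell$ on $\ell$.

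Alternatively, and perhaps more cleanly, I would deduce $2$-transitivity directly: to move a pair $(x,y)$ of points of $\ell$ to a pair $(x',y')$ of points of $\ell$, first use $2$-transitivity of $(X,G)$ to send $(x,y)\mapsto(x',y')$ by some $g\in G$; then $\ell g$ and $\ell$ are two lines through $x'$ and through $y'$, so by uniqueness of the line through two points (the proposition just above), $\ell g=\ell$, so $g\in G_\ell$. This is in fact immediate and handles everything at once! So the real content is only: \emph{uniqueness of the line on two points} (already proved) plus \emph{$2$-transitivity of $(X,G)$} (a hypothesis of Theorem~\ref{thm.A}). The final clause, that $\ell$ has degree~$1$, then follows because a $2$-transitive action of a group of fMr on an infinite set forces the set to have degree $1$: the induced action of $(G_\ell)^\circ$ on $\ell$, or just on $\ell$ minus a point, is transitive (a standard fact: the connected component of a transitive group of fMr is already transitive when there is no nontrivial finite quotient, which $2$-transitivity guarantees via transitivity on the rank-$1$ complement of a point), so $\ell$ consists of a single $(G_\ell)^\circ$-orbit and thus has degree $1$.

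The main obstacle, such as it is, is making sure the degree-$1$ conclusion is genuinely justified rather than asserted: I need to confirm that a $2$-transitive infinite permutation group of fMr acts on a set of degree $1$. I would handle this by recalling that for a transitive action, the number of orbits of $G^\circ$ equals $[G:G^\circ G_a]$ and these orbits are permuted transitively by $G$ with equal rank; if $\ell$ had degree $d>1$ then removing a point $a\in\ell$ leaves $\ell-\{a\}$ of rank $1$ and degree $d$, but $G_{\ell,a}=(G_\ell)_a$ acts transitively on $\ell-\{a\}$ by $2$-transitivity, and a transitive action cannot have a set of rank $1$ and degree $>1$ broken into finitely many rank-$0$... — the cleanest route is: $2$-transitivity implies the action of $G_\ell$ on $\ell$ is primitive (no nontrivial invariant equivalence relation), and an infinite primitive permutation group of fMr acts on a set of degree $1$, since the classes of the equivalence relation ``same Morley degree component'' would otherwise be a nontrivial block system. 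I would cite the relevant lemma from \cite{BoCh08} or \cite{AlWi15} for this last point rather than reprove it.
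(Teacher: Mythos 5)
Your ``alternatively, and perhaps more cleanly'' paragraph is essentially the paper's proof: it combines $2$-transitivity of $(X,G)$ with the uniqueness of the line through two points to get $g\in G_\ell$, and then derives degree $1$ from transitivity of $(G_\ell)^\circ$ exactly as the paper does. The lengthy rank-counting argument in your first paragraph is unnecessary and can be discarded in favor of that clean version.
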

\begin{proof}
Set $N:= G_\ell$. Fix $x\neq y\in \ell$. Let $z\in \ell= \li{xy}$ with $z$ different from $x$ and $y$. Since $G_x$ is transitive on $\points-\{x\}$, there is a $g\in G_x$ taking $y$ to $z$. Of course, this implies that $g \in N$, so $N_x$ is transitive on $\ell - \{x\}$. Similarly, we find that $N_y$ is transitive on $\ell - \{y\}$, so $N$ is $2$-transitive on $\ell$. This also implies that $N^\circ$ must have a single orbit on $\ell$, so $\ell$ has degree $1$.
\end{proof}

We are now in a position to identify the torus. Recall that a \textdef{good torus} of $G$ is a definable divisible abelian subgroup $T$ such that every definable subgroup of $T$ is equal to the definable closure of its torsion subgroup. We will make use of the following fundament fact about  actions of tori.

\begin{fact}[{\cite[Lemma~3.11]{BoCh08}}]\label{fact.ActionsTori}
Let $(Y,H)$ be a transitive permutation group of fMr, $T$ a definable divisible abelian subgroup of $H$, and $O(T)$ the largest definable torsion free subgroup of $T$. Then $\rk(T /O(T))\le \rk Y$.
\end{fact}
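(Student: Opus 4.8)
The plan is to induct on $\rk Y$ (first reducing to $Y$ of Morley degree $1$), using that $\bar T := T/O(T)$ is a good torus and reading the bound off from the torsion of $\bar T$. For the base case $\rk Y = 1$, Fact~\ref{fact.Hru} says $H^\circ$ is an abelian regular group, $\agl_1(\fieldF)$, or $\psl_2(\fieldF)$; in each case the connected group $T$ lies in $H^\circ$, every connected abelian subgroup of which has rank at most $1$, so $\rk(T/O(T)) \le \rk T \le 1$.

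For the inductive step, recall that by the structure theory of divisible abelian groups of finite Morley rank $\bar T$ is a good torus, and that, $O(T)$ being torsion-free and divisible (hence a $\mathbb{Q}$-vector space, as are all its definable quotients), the projection $T\to\bar T$ restricts to an isomorphism $T[p]\cong\bar T[p]$ on $p$-torsion for each prime $p$. If $r := \rk(T/O(T)) = 0$ then the finite good torus $\bar T$ is trivial, so $T = O(T)$ is torsion-free and we are done; assume $r\ge 1$.

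Let $\Omega\subseteq Y$ be a $T$-orbit of maximal rank. As $T$ is abelian all points of $\Omega$ share a stabilizer $T_\Omega$, and by the orbit--stabilizer rank formula $\rk(T/O(T))\le\rk(T/T_\Omega)=\rk\Omega\le\rk Y$ as soon as $T_\Omega\le O(T)$; so it suffices to prove $T_\Omega\le O(T)$. Suppose not. Then $\bar T_\Omega := T_\Omega O(T)/O(T)$ is a nontrivial good torus, hence contains an element of prime order $p$; a preimage $w\in T_\Omega$ of it satisfies $w^p\in O(T)$, its unique $p$-th root $v$ in $O(T)$ lies in $T_\Omega$ (because $v^p=w^p\in T_\Omega$ and $O(T)/(O(T)\cap T_\Omega)$ is torsion-free), and so $\sigma:=wv^{-1}\in T_\Omega$ has order exactly $p$. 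Now one invokes the fact that in a connected transitive permutation group of finite Morley rank no nonidentity element has a full-rank fixed-point set: if $\rk\fp_Y(\sigma)=\rk Y$, connectedness makes $\fp_Y(\sigma)$ and each translate $\fp_Y(\sigma^h)$ co-small, so by the descending chain condition $\bigcap_{h\in H}\fp_Y(\sigma^h)=\fp_Y(N)$, with $N$ the normal closure of $\sigma$, is co-small, hence nonempty, contradicting $H$-transitivity since $N\ne 1$. Therefore $Z:=\fp_Y(\sigma)$ is a proper, $T$-invariant (as $\sigma$ is central in $T$) subset of $Y$ with $\rk Z<\rk Y$, and $\Omega\subseteq Z$.

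The step I expect to be the main obstacle is converting this into a contradiction. The plan is to exploit that $\Omega$ remains a maximal-rank $T$-orbit inside the lower-rank set $Z$: applying the inductive hypothesis to the connected transitive sub-actions occurring within $Z$, and combining this with the rigidity of good tori (a definable family of definable subgroups of $\bar T$ is finite, so only finitely many subgroups arise among the images $\bar T_y$, $y\in Y$) together with the faithfulness identity $\bigcap_{y\in Y}T_y=1$, one aims to force $\bar T_\Omega=1$, contradicting its nontriviality and completing the induction.
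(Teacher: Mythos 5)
This statement is quoted from \cite[Lemma~3.11]{BoCh08} and the paper gives no proof of it, so your attempt can only be judged on its own terms; unfortunately it has a fatal structural flaw in addition to the step you yourself flag as incomplete. The flaw is the reduction ``it suffices to prove $T_\Omega\le O(T)$'' for an arbitrary $T$-orbit $\Omega$ of maximal rank: that statement is simply false. Take $H=\pgl_2(\fieldF)\times\pgl_2(\fieldF)$ acting transitively on $Y=\proj^1(\fieldF)\times\proj^1(\fieldF)$ and let $T\cong\fieldF^\times$ be embedded by $t\mapsto(\mathrm{diag}(t,1),\mathrm{diag}(t^2,1))$, so $O(T)=1$ and $\rk(T/O(T))=1\le 2=\rk Y$. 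The orbit of $([0{:}1],[1{:}1])$ has rank $1$, which is the maximal orbit rank, yet its stabilizer is $\{\pm1\}\not\le O(T)$. In this example every object you construct exists --- $\sigma=(1,\mathrm{diag}(-1,1))$ has order $2$, fixes $\Omega$ pointwise, and $\fp_Y(\sigma)=\proj^1\times\{[0{:}1],[1{:}0]\}$ has rank $1<\rk Y$ --- and no contradiction is available. So the concluding step, which you describe only as a ``plan'' to ``force $\bar T_\Omega=1$,'' cannot be carried out; relatedly, your induction on $\rk Y$ has nothing to bite on, since $Z=\fp_Y(\sigma)$ is merely a definable subset and carries no transitive permutation group of smaller rank to which the inductive hypothesis applies. (The intermediate manipulations --- divisibility of $O(T)\cap T_\Omega$, extraction of $\sigma$ of order $p$ in $T_\Omega$, and the observation that a nontrivial element of a connected transitive group cannot have a generic fixed-point set --- are fine.)

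The repair is to replace ``a maximal-rank orbit'' by a genericity-plus-rigidity argument. Write $C$ for the definable closure of the torsion of $T$, a good torus with $\rk C=\rk(T/O(T))$. By rigidity of good tori the family $\{C_y^\circ: y\in Y\}$ is finite, hence constant, say equal to $C_0$, on a generic definable $Y_0\subseteq Y$; then $\fp_Y(C_0)$ is generic, and the conjugation/descending-chain argument you already use for $\sigma$ shows $C_0=1$. Thus $C_y$ is finite for generic $y$, and the same generic-fixed-point argument applied to a torsion element of $C_y$ forces $C_y=1$ for generic $y$. The orbit $yC$ then has rank $\rk C=\rk(T/O(T))$ and sits inside $Y$, which is the desired bound. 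In your example this mechanism correctly selects the orbit of $([1{:}1],[1{:}1])$, whose stabilizer is trivial, rather than the ``bad'' maximal-rank orbit above.
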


\begin{proposition}\label{prop.MaximalTorus}
If $1,\ldots,n+2 \in \points$ are in general position, then $G_{1,\ldots,n+1}$ is a self-centralizing, maximal good torus of $G$. 
\end{proposition}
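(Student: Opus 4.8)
The plan is to show, in turn, that $H:=G_{1,\ldots,n+1}$ is connected of rank $n$, then abelian, then a good torus, and finally self-centralizing; maximality will then come for free. Throughout I would lean on the model picture: $H$ is meant to be the diagonal torus of $\pgl_{n+1}(\fieldF)$ stabilizing the $n+1$ coordinate points, and the steps below are the abstract analogues of elementary facts about that torus.

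That $\rk H=n$ and that $H$ is connected is immediate from Corollary~\ref{cor.RankG}. For the image of $H$ in the quotient: by Lemma~\ref{lem.ImagePointStabilizersInQuotient} (with $k=n+1$), $\modu{G_{1,\ldots,n+1}}=\modu{G_{1,\overline{2},\ldots,\overline{n+1}}}$ inside $\modu{G_1}\cong\pgl_n(\fieldF)$, that is, it is the stabilizer in $\pgl_n(\fieldF)$ of the points $\overline{2},\ldots,\overline{n+1}$ of $\proj^{n-1}(\fieldF)$. Those $n$ points are in general position --- the tuple $(\li{12},\ldots,\li{1(n+1)})$ is generic in $\lines(1)^n$, being an initial segment of the generic tuple appearing in the proof of Corollary~\ref{cor.RankG} --- so $\modu{G_{1,\ldots,n+1}}$ is conjugate to the standard diagonal torus of $\pgl_n(\fieldF)$, a self-centralizing maximal good torus of $\pgl_n(\fieldF)$ of rank $n-1$. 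In particular $K_1\cap H=\ker(H\to\modu{G_1})$ has rank $1$, and the same statements hold with any of $1,\ldots,n+1$ in place of $1$.

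Since the image of $H$ in $\modu{G_1}$ is abelian we get $[H,H]\le K_1\cap H$, and running the same argument with base point $2$ gives $[H,H]\le K_2\cap H$; as $K_1\cap K_2=\{1\}$ (proved inside the proof of Corollary~\ref{cor.RankG}), $H$ is abelian. The two quotient maps then combine to an embedding of $H$ into $\modu{G_1}\times\modu{G_2}$ whose image is a connected definable subgroup of the product of the two diagonal tori --- the kernel being $(K_1\cap H)\cap(K_2\cap H)=K_1\cap K_2\cap H=\{1\}$. A product of good tori is a good torus, and a connected definable subgroup of a good torus is again a good torus, so $H$ is a good torus, necessarily of rank $n$.

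It remains to prove $C_G(H)=H$. The geometric input is that $\fp(H)=\{1,\ldots,n+1\}$; the inclusion $\supseteq$ is clear. Conversely, if $z\in\fp(H)$ and $i\in\{1,\ldots,n+1\}$ with $z\neq i$, then $H$ stabilizes the unique line $\li{iz}$, so the image of $H$ in $\modu{G_i}$ --- which, with $i$ as base point, is the diagonal torus of $\pgl_n(\fieldF)$ on $\lines(i)\cong\proj^{n-1}(\fieldF)$ relative to the frame $\{\overline{\li{ij}}:j\neq i\}$ --- fixes $\overline{\li{iz}}$; but such a torus fixes only its $n$ frame points, so $\li{iz}=\li{ij}$ for some $j$, whence $z\in\li{ij}$. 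Intersecting the constraints coming from suitably many base points (two distinct lines meet in at most one point) shows $\fp(H)$ is finite and in fact pins it down to $\{1,\ldots,n+1\}$. Given this, for $c\in C_G(H)$ and $h\in H$ we have $h(c(i))=c(h(i))=c(i)$, so $c$ maps $\{1,\ldots,n+1\}=\fp(H)$ to itself; the kernel of the resulting map $C_G(H)\to\sym_{n+1}$ is $C_G(H)\cap G_{1,\ldots,n+1}=H$, so the connected group $C_G(H)^\circ$ fixes $1,\ldots,n+1$ and hence $C_G(H)^\circ=H$, while triviality of the finite group $C_G(H)/H$ follows from the faithfulness of the Weyl-group action on the torus --- already visible in $\modu{G_1}\cong\pgl_n(\fieldF)$, where the normalizer of the diagonal torus induces $\sym_n$ on it --- together with a little control of torsion in $C_G(H)$. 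Finally, maximality is automatic: a good torus is abelian, so any good torus containing $H$ lies in $C_G(H)=H$.

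The two steps I expect to be the real work are (i) upgrading ``$\fp(H)$ is finite'' to the exact equality $\fp(H)=\{1,\ldots,n+1\}$, which requires careful bookkeeping with the incidence structure developed so far, and (ii) passing from $C_G(H)^\circ=H$ to $C_G(H)=H$, which needs one to rule out torsion elements of $C_G(H)$ outside $H$ (equivalently, to understand $N_G(H)$); everything else is routine manipulation of ranks and of the already-established quotient structure.
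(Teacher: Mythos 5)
Your core argument that $H:=G_{1,\ldots,n+1}$ is a good torus takes a genuinely different, and arguably cleaner, route than the paper. The paper works from the single base point $1$: it shows that the kernel $K_1$ of $G_1\to\modu{G_1}$ induces on each line $\li{1i}$ a Borel subgroup of the form $\agl_1$ (the all-good-tori alternative being excluded by Fact~\ref{fact.ActionsTori}), and then runs a connected Fitting subgroup computation to show that the rank-one piece $(H\cap K_1)^\circ$ is not contained in the unipotent part, hence is a torus. You instead use two base points together with the observation $K_1\cap K_2=1$ from the proof of Corollary~\ref{cor.RankG} to embed the connected group $H$ into the product of the two diagonal tori inside $\modu{G_1}\times\modu{G_2}$. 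Since the paper itself invokes the fact that a connected definable subgroup of a product of good tori is again a good torus, your argument closes with exactly the tools the paper allows itself while bypassing the $\agl_1$-versus-$\psl_2$ case analysis and the Fitting subgroup argument entirely. Note also that maximality follows directly from Fact~\ref{fact.ActionsTori} (as in the paper), so you need not route it through self-centralizing, which is the weakest link in your write-up.

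The genuine gap is in the self-centralizing step. Your argument rests on $\fp(H)=\{1,\ldots,n+1\}$, which is precisely Corollary~\ref{cor.FPA}; the paper derives that corollary \emph{from} this proposition (using that $H$ is a self-centralizing good torus to get almost self-normalization), so citing it here would be circular in the paper's logical order. Your independent sketch of it has a concrete hole: from ``the image of $H$ in $\modu{G_i}$ fixes only the $n$ frame lines'' you obtain $z\in\li{i\,j(i)}$ for each $i$, but to intersect these constraints down to $\{1,\ldots,n+1\}$ you must know that lines such as $\li{12}$ and $\li{34}$, on four points in general position, are disjoint. At this stage only ``two distinct lines meet in at most one point'' is available, and nothing yet excludes a spurious common point of $\li{12}$ and $\li{34}$ --- such a point would be fixed by $G_{1,2,3,4}\ge H$, which is essentially the configuration the Fixed-Point Assumption is designed to rule out. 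The passage from $C_G(H)^\circ=H$ to $C_G(H)=H$ is likewise only gestured at. (In fairness, the paper's own proof is silent on self-centralizing, explicitly establishing only the good-torus and maximality claims; but the route you propose for it does not work as stated.)
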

\begin{proof}
Set $H:= G_{1,\ldots,n+1}$; let $K$ be the kernel of $(\lines(1),G_1)$. By Lemma~\ref{lem.ImagePointStabilizersInQuotient} and the fact that $(\lines(1),G_1)\cong (\proj^{n-1}(\fieldF),\pgl_{n}(\fieldF))$, $\modu{H}$, hence $H/(H\cap K)$ is a good torus of rank $n-1$. As a consequence of Corollary~\ref{cor.RankG}, $K$ has rank $ n+1$, and $A:= (H\cap K)^\circ$ has rank $1$.  We must show that $A$ is a good torus.

Set $\ell_i := \ell_{1i}$, the line through $1$ and $i$, and consider the canonical map from $K$ to $G^{\ell_2}\times\cdots\times G^{\ell_{n+2}}$ where $G^{\ell_{i}}$ is the group induced by $G$ on $\ell_i$. The kernel of the map, $N$, fixes $1,\ldots,n+2$, so by Corollary~\ref{cor.RankG}, $N=1$. Since $(\lines(1),\modu{G_1}) \cong (\proj^{n-1}(\fieldF),\pgl_{n}(\fieldF))$, $G_1$ permutes the lines $\ell_2,\ldots,\ell_{n+2}$ transitively while normalizing $K$, so the images of $K$ in each $G^{\ell_i}$ are isomorphic. 

By the previous lemma,  $G^{\ell_i}$ is of the form $\agl_1$ or $\psl_2$, so as the image of $K$ in each $G^{\ell_i}$ fixes a point (namely $1$), the image is either a good torus or a Borel subgroup of $\psl_2$ of the form $\agl_1$. In the former case, $K$ embeds into a product of good tori. However, this implies that $K^\circ$ is a good torus of rank $n+1$, which contradicts Fact~\ref{fact.ActionsTori}. Thus, the image of $K$ in each $G^{\ell_i}$ is of the form $\agl_1$, so $K^\circ$ is solvable and nonnilpotent. In particular, if $F:=F^\circ(K)$ is the connected Fitting subgroup of $K$, then the image of $F$ in $G^{\ell_i}$ is the unipotent radical of $\agl_1$.

We now claim that $A \nleq F$. Assume $A\le F$. For $2\le i\le n+2$, define \[A_{\hat{i}} := (G_{1,\ldots,\hat{i},\ldots,n+2} \cap K)^\circ\] where $\hat{i}$ denotes that $i$ has been removed; thus, $A = A_{\widehat{n+2}}$. The $A_{\hat{i}}$ are $G_1$-conjugate, so our assumption implies that $A_{\hat{i}} \le F$ for all $2\le i\le n+2$. Now consider the sequence of points stabilizers $F_1, F_{1,2},\ldots, F_{1,\ldots,n+2}$; of course $F = F_1$. Note that, for $2\le i\le n+2$,  $A_{\hat{i}}  \subseteq F_{1,\ldots,i-1}$ with $A_{\hat{i}} \nsubseteq  (F_{1,\ldots,i})^\circ$; this implies that $F$ has rank at least $n+1$. However, we have observed that $K^\circ$ is nonnilpotent of rank $n+1$, so it must be that $A \nleq F$.

Now,  $A$ must have a nontrivial image in some $G^\ell_i$. The image of $F$ in $G^{\ell_i}$ is the unipotent radical of $\agl_1$, so as $A \nleq F$, we find that $A$ is a good torus. We conclude that $H$ is a good torus, which by Fact~\ref{fact.ActionsTori}, is maximal. 
\end{proof}

The previous proposition has an extremely important consequence for the action of $G$, namely that the so-called Fixed-point Assumption holds (cf. \cite[Proposition~5.1(3)]{AlWi15}). The proof utilizes (very lightly) the $\Sigma$-groups from \cite[Section~4.4]{AlWi15}, which we now define.

\begin{definition}
 If $x_1,\ldots,x_m\in \points$ are in general position,  we define the group $\Sigma(x_1,\ldots,x_{k-1};x_k,\ldots,x_m)$ to be the subgroup of $G$ that normalizes the set $\{x_1,\ldots,x_{k-1}\}$ while fixing $x_k,\ldots,x_m$ pointwise. 
\end{definition}

\begin{corollary}[Fixed-Point Assumption]\label{cor.FPA}
If $1,\ldots,n+1 \in \points$ are in general position, then $\fp(G_{1,\ldots,n+1}) = \{1,\ldots,n+1\}$.
\end{corollary}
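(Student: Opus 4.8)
The plan is to show that the fixed point set $F := \fp(G_{1,\ldots,n+1})$ is finite and $G_1$-invariant (where $G_1$ acts on $F$ via its action on the $(n+1)$-tuples extending $1$), then to leverage the torus structure from Proposition~\ref{prop.MaximalTorus} to pin it down exactly. Since $T := G_{1,\ldots,n+1}$ is a maximal good torus and hence self-centralizing, any point $z \in F$ has $T \le G_z$, so $z$ is a fixed point of the torus $T$ acting on $X$; the goal is to count these. First I would observe that $T$ is, up to conjugacy, the stabilizer of any $(n+1)$ points in general position, and in particular the points $1,\ldots,n+1$ are in $\fp(T)$, so $|\fp(T)| \ge n+1$ and we must show equality.

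The key step is to transfer the problem to the quotient geometry $(\lines(1),\modu{G_1}) \cong (\proj^{n-1}(\fieldF),\pgl_n(\fieldF))$. Suppose $z \in \fp(T)$ with $z \notin \{1,\ldots,n+1\}$. Consider the line $\ell_{1z}$, i.e.\ the image $\oz \in \lines(1)$. Since $T \le G_1$ fixes $z$, it fixes $\ell_{1z}$, so $\modu{T}$ fixes $\oz$ in $\proj^{n-1}(\fieldF)$. Now $\modu{T}$ is a maximal good torus of $\pgl_n(\fieldF)$ of rank $n-1$, hence (being the image of the stabilizer of $1,\ldots,n+1$) it is the stabilizer of $n$ points in general position in $\proj^{n-1}(\fieldF)$ — namely the images $\modu{2},\ldots,\modu{n+1}$ together with, implicitly, a remaining hyperplane/frame point; in $\pgl_n$ a maximal torus is exactly the pointwise stabilizer of a projective frame of $n+1$ points, and its fixed points on $\proj^{n-1}$ are precisely those $n+1$ frame points. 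So $\oz$ must be one of those frame points. The frame points corresponding to $\modu{2},\ldots,\modu{n+1}$ give back $\ell_{1z} = \ell_{1i}$ for some $i \in \{2,\ldots,n+1\}$, forcing $z \in \ell_{1i}$; the one remaining frame point, call its preimage line $m$, is the only other possibility. Running the same argument with the roles of $1$ and $2$ interchanged (using that $2,\ldots,n+1$ together with $1$ are also in general position, and $T = G_{1,\ldots,n+1}$ is unchanged), $z$ lies on a line through $2$ that is similarly constrained. Intersecting these constraints — a line through $1$ meets a line through $2$ in at most one point — leaves only finitely many candidate points $z$, and a short case analysis (distinguishing whether the line through $1$ in question is some $\ell_{1i}$ or the exceptional line $m$, and likewise for $2$) shows the only consistent solutions are $z \in \{1,\ldots,n+1\}$: if $z \in \ell_{1i} \cap \ell_{2j}$ with $i,j \in \{3,\ldots,n+1\}$ and the lines distinct then $z$ is a triangle-vertex type intersection forced to be among the original points, while the exceptional-line cases contradict $z$ being fixed by all of $T$ once one notes $m$ is moved by a suitable torus element (it is not a frame point for the frame based at $2$).

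Alternatively — and this is likely the cleaner route — I would argue by rank directly. Since $\fp(T)$ is $N_G(T)$-invariant and $N_G(T)/T$ is finite (as $T$ is self-centralizing and a maximal good torus), and since $G_1$ permutes the lines through $1$ as $\pgl_n$ permutes points of $\proj^{n-1}$, the set $\fp(T)$ maps into the finite set of $\modu{T}$-fixed points of $\proj^{n-1}(\fieldF)$, which has size exactly $n+1$; moreover the fibre over each such point is a single $T$-fixed point because $T$ acts on the line $\ell$ (a set of degree $1$ and rank $1$) with the induced group being a good torus in $\agl_1(\fieldF)$ or $\psl_2(\fieldF)$ fixing the point $1 \in \ell$, and such a torus fixes at most one further point of $\ell$ — exactly one when the induced group is the full torus. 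Counting, $|\fp(T)| \le n+1$, and combined with $\{1,\ldots,n+1\} \subseteq \fp(T)$ we get equality, and in particular $\fp(G_{1,\ldots,n+1}) = \fp(T) = \{1,\ldots,n+1\}$, where the first equality uses $C_G(T) = T$ so that $G_{1,\ldots,n+1} = T$ has the same fixed points as $T$ itself.

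I expect the main obstacle to be the bookkeeping in identifying the fixed points of a maximal torus of $\pgl_n(\fieldF)$ on $\proj^{n-1}(\fieldF)$ with a projective frame, and in particular ruling out the "exceptional" frame point $m$ as a genuine fixed point of the ambient torus $T$ on $X$ — i.e.\ checking that the lift from $\modu{T}$-fixed points of the quotient to $T$-fixed points of $X$ does not introduce spurious points beyond the original $n+1$. This is where one must use that $T$ fixes $n+1$ genuinely independent points of $X$ (not just $n$ points of the quotient plus the implicit kernel data) together with the degree-$1$, rank-$1$ structure of lines from the preceding lemma and Fact~\ref{fact.Hru}.
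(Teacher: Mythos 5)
There is a genuine gap, and it sits exactly where you flag it in your closing paragraph. Both of your routes reduce to the claim that on each $T$-fixed line $\ell = \li{1i}$ the induced image of $T := G_{1,\ldots,n+1}$ is a \emph{nontrivial} torus and hence fixes at most one point of $\ell$ beyond $1$. But nothing you say excludes the degenerate case in which the image of $T$ in $G^{\ell}$ is trivial, i.e.\ $T$ fixes $\ell$ pointwise---and that is precisely the case in which spurious fixed points would live. Indeed, if $T$ fixes some $x \in \li{12}$ with $x \notin \{1,2\}$, then the image of $T$ in $G^{\li{12}}$ fixes three points of a rank-one, degree-one set on which the induced group is $\agl_1$ or $\psl_2$ (Fact~\ref{fact.Hru}), so that image is forced to be \emph{trivial} and $T$ fixes the whole line; your ``at most one further fixed point'' count collapses exactly when it is needed. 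Excluding this degeneration is the entire content of the paper's proof: there one shows that trivial action on one $\li{1i}$ propagates to all of $\ell_2,\ldots,\ell_{n+1}$ (conjugating by $\Sigma(2,\ldots,n+1;1)$, which normalizes $H:=G_{1,\ldots,n+1}$ and permutes these lines transitively), whence $H$ equals the pointwise centralizer of these lines and is therefore normalized by the setwise stabilizer $\widehat{H} := G_{1,\ell_2,\ldots,\ell_{n+1}}$; since $H$ is a self-centralizing good torus (Proposition~\ref{prop.MaximalTorus}) it is almost self-normalizing, contradicting $\rk(\widehat{H}/H) = n > 0$. So Proposition~\ref{prop.MaximalTorus} is indeed the right input, but it must be used through almost self-normalization, not through a fixed-point count for tori.

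A secondary error: in your first route you assert that a maximal torus of $\pgl_n(\fieldF)$ is the pointwise stabilizer of a projective frame of $n+1$ points of $\proj^{n-1}(\fieldF)$ and fixes exactly those $n+1$ points. The pointwise stabilizer of $n+1$ points in general position in $\proj^{n-1}$ is trivial (that is generic sharp $(n+1)$-transitivity); the maximal torus $\modu{T}$ is the pointwise stabilizer of the $n$ coordinate points and fixes exactly those $n$ points, so there is no ``exceptional frame point'' and no line $m$. This miscount is not fatal in itself---the $T$-fixed lines through $1$ are exactly $\li{12},\ldots,\li{1(n+1)}$, which is all you need for the transfer to the quotient---but the case analysis built on the phantom line $m$ should be discarded, and the real work remains the kernel issue above.
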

\begin{proof}
Set $H:= G_{1,2,\ldots,n+1}$. Let $\ell_i := \ell_{1i}$, and set  $\widehat{H}:=G_{1,\ell_2,\ldots,\ell_{n+1}}$. Assume that $H$ fixes a point $x\notin \{1,2,\ldots,n+1\}$. Then $H$ certainly fixes the line $\ell_{1,x}$ (setwise), so by Lemma~\ref{lem.ImagePointStabilizersInQuotient}, $\widehat{H}$ does as well. However, since $(\lines(1),G_1)\cong (\proj^{n-1}(\fieldF),\pgl_{n}(\fieldF))$, the image of $\widehat{H}$ in $(\lines(1),G_1)$ fixes no more lines than those in $\{\ell_2,\ldots,\ell_{n+1}\}$, so without loss of generality, we may assume that $\ell_{1,x} = \ell_{1,2} = \ell_2$.

Now, the image of $H$ in $G^{\ell_2}$ fixes the points $1$, $2$, and $x$, so by Fact~\ref{fact.Hru}, $H$ acts trivially on $\ell_2$. Moreover, the normalizer of $H$ in $G_1$ contains $\Sigma(2,3,\ldots,n+1;1)$, which permutes the set ${\ell_2,\ldots,\ell_{n+1}}$ transitively, so $H$ acts trivially on $\ell_i$ for every $2\le i\le n+1$. This implies that $H$ coincides with the subgroup of $G_1$ centralizing (the points of) each of the lines $\ell_2,\ldots,\ell_{n+1}$. By definition, the subgroup of $G_1$ fixing each of the lines (setwise) is $\widehat{H}$, so we find that $\widehat{H}$ normalizes $H$. However, by Proposition~\ref{prop.MaximalTorus}, $H$ is a self-centralizing good torus, which implies that $H$ is almost self-normalizing (see \cite[Lemma~4.23]{ABC08}). Thus, $\rk H = \rk \widehat{H}$. But, it is easily seen (for example using Corollary~\ref{cor.RankG}) that $\rk (\widehat{H}/H) = n$,  a contradiction.
\end{proof}

\section{Veblen's Axiom}
We now prove Veblen's axiom, which we restate here for convenience. The claim is that if $x,y,z,w$ are distinct points for which $\li{xy}$ and $\li{zw}$ intersect, then the lines  $\li{yz}$ and $\li{xw}$ intersect as well, see below for a couple of different pictures. Our idea is to show that for some $p\in \li{yz}$ the group $G_{x,y,z}$ contains an element $g$ taking $\li{xp}$ to $\li{xw}$; in this case, $p\cdot g$ will be the desired point of intersection. 
\begin{center}
\begin{tikzpicture}[scale = 1.2,line width = .75]
\newcommand{\dotsize}{2pt}
{\scriptsize

\node (x) at ({cos(210)},{sin(210}) {}; \fill (x) circle (\dotsize);
\node (a) at ({cos(90)},{sin(90}) {}; \fill (a) circle (\dotsize);
\node (w) at ({cos(-30)},{sin(-30}) {}; \fill (w) circle (\dotsize);

\node (y) at ({0.5*cos(150)},{0.5*sin(150}) {}; \fill (y) circle (\dotsize);
\node (z) at ({0.5*cos(30)},{0.5*sin(30}) {}; \fill (z) circle (\dotsize);
\node (q) at ({0.5*cos(270)},{0.5*sin(270}) {}; 

\draw  ({0.5*cos(-40)},{0.5*sin(-40)}) arc (-40:220:{1/2});
\draw[dashed]  ({0.5*cos(225)},{0.5*sin(225}) arc (225:315:{1/2});
\draw plot  coordinates {(x) (a) (w) (x)};

\fill (q) circle ({0.9*\dotsize});
\filldraw[fill = white,line width = .8] (q) circle ({0.9*\dotsize});

\node[anchor = 45] at (x) {$x$}; 
\node[anchor = -45] at  (y) {$y$};
\node[anchor = -135]  at (z) {$z$};
\node[anchor = 135] at (w) {$w$};
}
\end{tikzpicture}
\hspace{20pt}
\begin{tikzpicture}[scale = 1.2,line width = .75]
\newcommand{\dotsize}{2pt}
{\scriptsize

\node (x) at ({cos(210)},{sin(210}) {}; \fill (x) circle (\dotsize);
\node (a) at ({cos(90)},{sin(90}) {}; \fill (a) circle (\dotsize);
\node (w) at ({cos(-30)},{sin(-30}) {}; \fill (w) circle (\dotsize);

\node (y) at ({0.5*cos(150)},{0.5*sin(150}) {}; \fill (y) circle (\dotsize);
\node (z) at ({0.5*cos(30)},{0.5*sin(30}) {}; \fill (z) circle (\dotsize);
\node (q) at (2,-.125) {}; 

\draw plot  coordinates {(x) (a) (w) (x)};
\draw[dashed] plot [smooth] coordinates {(1.25,{0.5*sin(30}) (1.6,{0.5*sin(30) -0.05}) (2,-.125)};
\draw[dashed] plot [smooth] coordinates {(1.25,{sin(-30}) (1.6,{sin(-30) + 0.05}) (2,-.125)};
\draw plot [smooth] coordinates {(y) (z) (1.25,{0.5*sin(30})};
\draw plot  coordinates {(w) (1.25,{sin(-30})};

\fill (q) circle ({0.9*\dotsize});
\filldraw[fill = white,line width = .8] (q) circle ({0.9*\dotsize});

\node[anchor = 45] at (x) {$x$}; 
\node[anchor = -45] at  (y) {$y$};
\node[anchor = -135]  at (z) {$z$};
\node[anchor = 135] at (w) {$w$};
}
\end{tikzpicture}
\end{center}

\begin{setup}
As in the previous section, $(X,G)$ satisfies the hypotheses of Theorem~\ref{thm.A} with $n := \rk X$, $\points := X$, and $\lines := \{\li{ab}: a\neq b\in X\}$. 
\end{setup}

\begin{lemma}[Triangles are in general position]\label{lem.TrianglesAreInGP}
The points $x, y, z$ form a triangle if and only if they are in general position.
\end{lemma}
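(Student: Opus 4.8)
The forward direction is essentially free: any three points in general position form a triangle, since generic $3$-transitivity means their orbit in $X^3$ has maximal rank $3n > 2n+1$ (using $n\ge 3$), so $\lambda$ fails. The content is the converse — showing a triangle $x,y,z$ is actually in general position. The plan is to compute the rank of the orbit $\orbit$ of such a triple and show it equals $3n$, the maximal value. By generic $2$-transitivity, the pair $(x,y)$ has orbit of rank $2n$, so $\rk\orbit = 2n + \rk(z\cdot G_{x,y})$, and it suffices to show $z\cdot G_{x,y}$ has rank $n$, i.e. is generic in $X$.

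The key leverage is the quotient structure. Since $x,y$ are in general position (by $2$-transitivity), Lemma~\ref{lem.ImagePointStabilizersInQuotient} gives $\modu{G_{x,y}} = \modu{G_{x,\oy}}$, and as $(\modu{X_1},\modu{G_1}) \cong (\proj^{n-1}(\fieldF),\pgl_n(\fieldF))$ is $2$-transitive, the stabilizer $\modu{G_{x,\oy}}$ acts with a generic orbit of rank $n-1$ on $\modu{X_x}$ — in fact transitively off the point $\oy$. So for $z$ forming a triangle with $x,y$, the image $\oz$ (i.e. the line $\li{xz}$ viewed in $\lines(x) \cong \modu{X_x}$) is \emph{not} equal to $\li{xy}$, precisely because $x,y,z$ are not collinear; hence $\oz$ lies in the generic $\modu{G_{x,y}}$-orbit. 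Thus $\li{xz}\cdot G_{x,y}$ has rank $n-1$ in $\lines(x)$. It remains to show that the fibers — the $G_{x,y}$-orbit of $z$ inside the line $\li{xz}$, modulo what is already accounted for — contribute a full extra rank $1$. For this I would fix a representative line and examine the action of the stabilizer $G_{x,y,\li{xz}}$ (equivalently $G_{x,y,\oz}$) on the rank-$1$ set $\li{xz} = \oz \cup \{x\}$: by Lemma~\ref{lem:UniqueSim} applied with the roles of the base point and $2$ played appropriately, or directly via the $2$-transitive action induced on the line together with $G_{x,y}$ moving $z$ off $\{x,y,\ldots\}$, one gets that $z\cdot G_{x,y,\li{xz}}$ has rank $1$. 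Combining, $\rk(z\cdot G_{x,y}) = (n-1) + 1 = n$, so $\rk \orbit = 3n$ and the triple is in general position.

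The main obstacle is the fiber computation — confirming that the $G_{x,y}$-orbit of $z$ meets its own line $\li{xz}$ in a rank-$1$ set rather than collapsing to a finite (or rank-$0$) set. The worry is that $G_{x,y}$ could stabilize $z$ within its line even while moving the line generically. I expect this is handled by noting that $G_{x,y,\li{xz}}$ still acts on $\li{xz}$ as (the point stabilizer inside) a group containing a $2$-transitive group on that line — one can produce elements of $G_x$ fixing $y$ and the line $\li{xz}$ but moving $z$, using transitivity of the relevant line stabilizer established in the lemma preceding Corollary~\ref{cor.RankG} combined with Lemma~\ref{lem.ImagePointStabilizersInQuotient} to keep $\oy$ fixed. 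Once rank additivity over the fibration $z \mapsto \li{xz}$ is set up cleanly, the two contributions $n-1$ and $1$ add as required, and the generic-position conclusion follows from maximality of $3n$. A secondary point to be careful about is degree/connectedness, but Corollary~\ref{cor.RankG} already gives what is needed there.
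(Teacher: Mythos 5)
Your setup is sound and your reductions are correct: for a triangle $x,y,z$ the orbit of $(x,y,z)$ has rank $2n+\rk(z\cdot G_{x,y})$; the map $z'\mapsto \li{xz'}$ fibers $z\cdot G_{x,y}$ over the generic $G_{x,y}$-orbit on $\lines(x)$ (of rank $n-1$, via Lemma~\ref{lem.ImagePointStabilizersInQuotient}), with fiber over $\li{xz}$ equal to $z\cdot G_{x,y,\li{xz}}$ by uniqueness of lines; so everything reduces to showing this fiber has rank $1$. But that last step is the entire crux of the lemma, and your proposed way of handling it does not work. You suggest producing elements of $G_{x,y,\li{xz}}$ moving $z$ within $\li{xz}$ from the $2$-transitivity of $G_{\li{xz}}$ on $\li{xz}$ together with Lemma~\ref{lem.ImagePointStabilizersInQuotient}. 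What that lemma actually yields is $G_{x,\li{xy},\li{xz}} = G_{x,y,\li{xz}}K_x$, where $K_x$ is the kernel of $G_x$ on $\lines(x)$; the elements that visibly move $z$ along $\li{xz}$ live in $K_x$ (which acts on each line through $x$ as an $\agl_1$ fixing $x$), and these do not fix $y$. Since $z\cdot\left(G_{x,y,\li{xz}}K_x\right) = \left(z\cdot G_{x,y,\li{xz}}\right)\cdot K_x$ and $K_x$ is already transitive on $\li{xz}-\{x\}$, the computation is consistent with $z\cdot G_{x,y,\li{xz}}$ being a single point; no contradiction is reached. The worry you flag yourself---that $G_{x,y}$ could stabilize $z$ inside its line while moving the line generically---is exactly what remains unaddressed.

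The paper closes this gap with the Fixed-Point Assumption (Corollary~\ref{cor.FPA}), which your argument never invokes and which is the essential ingredient (it is what the maximal-torus analysis of Proposition~\ref{prop.MaximalTorus} was for). Concretely: choose $a$ with $x,y,a$ in general position; since $\modu{G_{x,y}}=\modu{G_{x,\li{xy}}}$ is transitive on $\lines(x)-\{\li{xy}\}$, some $g\in G_{x,y}$ carries $\li{xa}$ to $\li{xz}$, so $w:=ag$ lies on $\li{xz}$ with $x,y,w$ in general position. Then $G_{x,y,w}$ is connected (Corollary~\ref{cor.RankG}) and contained in $G_{x,y,\li{xz}}$, so if $\left(G_{x,y,\li{xz}}\right)^\circ$ fixed $z$, then $G_{x,y,w}$ would fix the point $z\notin\{x,y,w\}$, contradicting Corollary~\ref{cor.FPA} after extending $x,y,w$ to $n+1$ points in general position avoiding $z$. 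With this inserted your direct rank count goes through, but at that stage it is only a reorganization of the paper's contrapositive argument, which runs the same comparison of $\left(G_{x,y,z}\right)^\circ$ with $\left(G_{x,y,\li{xz}}\right)^\circ$ and the same appeal to Corollary~\ref{cor.FPA}.
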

\begin{proof}
The reverse direction is clear. Now, suppose that $x, y, z$ are not in general position; we show that they are collinear. Assume not, so $\li{xy} \neq \li{xz}$.
Choose $a$ such that $x, y, a$ are in general position. Combining Lemma~\ref{lem.ImagePointStabilizersInQuotient} with the fact that $(\lines(x),G_x)$ is $2$-transitive, we see that $G_{x,y}$ is transitive on $\lines(x) - \li{xy}$, so there exists $g\in G_{x,y}$ taking $\li{xa}$ to $\li{xz}$. Thus, for $w = ag$, we have that $x,y,w$ are in general position, and $\li{xw} = \li{xz}$. We now work to show that $G_{x,y,w}$ fixes $z$, contradicting Corollary~\ref{cor.FPA}.

Since $x, y, z$ are not in general position, the corank of $G_{x,y,z}$ in $G_{x,y}$ is at most $n-1$. Also, by  Lemma~\ref{lem.ImagePointStabilizersInQuotient}, the orbit of $G_{x,y}$ on $\li{xz}$ is generic in $\lines(x)$, so the corank of $G_{x,y,\li{xz}}$ in $G_{x,y}$ is equal to $n-1$. Thus, it must be that $(G_{x,y,z})^\circ = (G_{x,y,\li{xz}})^\circ$. Now, as $\li{xw} = \li{xz}$, we find that $(G_{x,y,w})^\circ \le (G_{x,y,\li{xz}})^\circ  = (G_{x,y,z})^\circ$, so  $(G_{x,y,w})^\circ$, which is equal to $G_{x,y,w}$ by Corollary~\ref{cor.RankG}, fixes $z$. This contradicts Corollary~\ref{cor.FPA}.
\end{proof}

\begin{proposition}
Veblen's axiom holds.
\end{proposition}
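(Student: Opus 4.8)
The plan is to push the whole configuration into the known geometry on $\lines(x)\cong\proj^{n-1}(\fieldF)$ by projecting from $x$, via the map $\pi_x\colon a\mapsto\li{xa}$. First I would clear away the degenerate configurations by hand: if $\li{xy}=\li{zw}$, or if these two lines meet at a point of $\{x,y,z,w\}$, or if $x,y,z$ or $x,z,w$ fails to form a triangle, then a short case check exhibits, in each instance, one of $x,y,z,w$ as a common point of $\li{yz}$ and $\li{xw}$, and there is nothing to prove. So I may assume that $x,y,z$ and $x,z,w$ are triangles, that $\li{xy}\neq\li{zw}$, and that the unique point $s$ of $\li{xy}\cap\li{zw}$ lies outside $\{x,y,z,w\}$; in particular $x$ lies on neither $\li{yz}$ nor $\li{zw}$.

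The crux is a statement about projection. If $\ell$ is a line with $x\notin\ell$ and $a_1\neq a_2$ lie on $\ell$, then $\pi_x$ restricts to a definable injection of $\ell$ into $\lines(x)$ --- injective because two distinct lines meet in at most one point --- so $\pi_x(\ell)$ has rank $1$; moreover $\pi_x(\ell)$ is invariant under $G_{x,a_1,a_2}$, which by Lemma~\ref{lem.ImagePointStabilizersInQuotient} (applied with base point $x$, cf.~Corollary~\ref{cor.UniqueEqRel}) and Lemma~\ref{lem.TrianglesAreInGP} induces on $\lines(x)$ the full stabilizer in $\pgl_n(\fieldF)$ of the two points $\li{xa_1}$ and $\li{xa_2}$ of $\proj^{n-1}(\fieldF)$. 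Since $n-1\geq 2$, such a two-point stabilizer has exactly four orbits on $\proj^{n-1}(\fieldF)$: the two fixed points, the line $L$ they span with those two points deleted (which has rank $1$), and the complement of $L$ (which has rank $n-1\geq 2$). As $\pi_x(\ell)$ is invariant, of rank $1$, and contains $\li{xa_1}$ and $\li{xa_2}$, it cannot meet the single rank-$(n-1)$ orbit, so it lies on $L$; and being of rank $1$ and containing $\li{xa_1},\li{xa_2}$, it fills all of $L$. In other words, $\pi_x(\ell)$ is exactly the line of $\lines(x)$ through the images of any two of its points.

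Now apply this twice, writing $\overline{u}:=\li{xu}$. Since $x,z,w$ is a triangle, the projection statement (with $\ell=\li{zw}$) says $\pi_x(\li{zw})$ is the line of $\lines(x)$ through $\overline{z}$ and $\overline{w}$; but the Veblen hypothesis gives $\overline{y}=\li{xs}\in\pi_x(\li{zw})$, so $\overline{y},\overline{z},\overline{w}$ are collinear in $\lines(x)$. Hence $\overline{w}$ lies on the line of $\lines(x)$ through $\overline{y}$ and $\overline{z}$, and $\overline{w}\neq\overline{z}$ (else $x,z,w$ would be collinear) while $\overline{w}\neq\overline{y}$ (else $w\in\li{xy}\cap\li{zw}$, forcing $w=s$). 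Applying the projection statement once more with $\ell=\li{yz}$ (legitimate since $x,y,z$ is a triangle), $\pi_x(\li{yz})$ is exactly the line of $\lines(x)$ through $\overline{y}$ and $\overline{z}$, so $\overline{w}\in\pi_x(\li{yz})$. Thus $\li{xw}=\li{xp}$ for some $p\in\li{yz}$, whence $p\in\li{xw}\cap\li{yz}$ and Veblen's axiom holds. (Equivalently, in the language of the introduction: since $G_{x,y,z}$ is transitive on $\pi_x(\li{yz})\setminus\{\overline{y},\overline{z}\}$, one finds $g\in G_{x,y,z}$ and $p\in\li{yz}$ with $\li{xp}\cdot g=\li{xw}$, and then $p\cdot g$ is the intersection point.)

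I expect the third paragraph to be the main obstacle: it is the only place where the bare incidence hypothesis is converted into a statement inside the concrete projective space carried by the point stabilizer, and it relies on the orbit count for a two-point stabilizer in $\pgl_n(\fieldF)$ --- which is exactly where the standing hypothesis $n\geq 3$ enters. The rest is routine bookkeeping: checking that the enumerated degeneracies are the only ones and that $x$ misses each line to be projected.
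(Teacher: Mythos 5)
Your proof is correct, and it takes a genuinely different route through the second half of the argument. Both proofs rest on the same two pillars --- Lemma~\ref{lem.ImagePointStabilizersInQuotient} (via Lemma~\ref{lem.TrianglesAreInGP}) identifying the action of $G_{x,a,b}$ on $\lines(x)$ with a two-point stabilizer in $\pgl_n(\fieldF)$, and the four-orbit count for that stabilizer --- but they deploy them differently. The paper projects only $\li{yz}$ from $x$, concludes that $\li{xp}$ lies in the rank-$1$ orbit $Y$, and then needs a separate and noticeably harder argument to place $\li{xw}$ in $Y$: the auxiliary rank-$2$ set $A=\{q:\li{qz}\cap\li{xy}\neq\emptyset\}$, a reduction to the case $n=3$, and a contradiction against the Fixed-Point Assumption (Corollary~\ref{cor.FPA}). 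You instead upgrade the projection step to a clean statement --- for $x\notin\ell$ the map $a\mapsto\li{xa}$ carries $\ell$ \emph{onto} the line of $\lines(x)$ through the images of two of its points, the surjectivity coming from the orbit decomposition exactly as you argue --- and then apply it to $\li{zw}$ as well as to $\li{yz}$. The intersection point $s$ forces $\li{xy},\li{xz},\li{xw}$ to be collinear in $\lines(x)$, and surjectivity for $\li{yz}$ hands over the intersection point directly, with no group element needed. This removes the $n=3$ case analysis and the appeal to Corollary~\ref{cor.FPA} inside this proposition entirely (it still enters indirectly through Lemma~\ref{lem.TrianglesAreInGP}, which both proofs use). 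The price is a slightly longer list of degenerate configurations to dispose of up front --- in particular you must also assume $x,z,w$ forms a triangle, which the paper's argument never needs --- but each of those cases does produce a trivial common point as you claim, so the reduction is sound.
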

\begin{proof}
Let  $x,y,z,w$ be distinct points for which $\li{xy}$ and $\li{zw}$ intersect. We may assume that $\li{xy}\neq\li{zw}$, so $x,y,z$ are in general position (by Lemma~\ref{lem.TrianglesAreInGP}). Set $H:= G_{x,y,z}$. We know that $(\lines(x),\modu{G_x})\cong (\proj^{n-1}(\fieldF),\pgl_{n}(\fieldF))$ where $\modu{G_x} = G_x/K$, with $K$ the kernel of the action of $G_x$ on $\lines(x)$. Additionally, by Lemma~\ref{lem.ImagePointStabilizersInQuotient}, we know that $\modu{H} = \modu{G_{x,\li{xy},\li{xz}}}$. 

The key observation to make is that, by inspection of the action of $\pgl_{n}(\fieldF)$ on $\proj^{n-1}(\fieldF)$, the stabilizer of two lines $\modu{G_{x,\li{xy},\li{xz}}}$  has precisely $4$ orbits on $\lines(x)$: $\{\li{xy}\}$, $\{\li{xz}\}$, one of rank $1$ that we call $Y$, and one of (full) rank $n-1$. And importantly, the same statement holds for $H$ (by Lemma~\ref{lem.ImagePointStabilizersInQuotient}). As such, our goal is to show that  for some $p\in \li{yz}$ with $p$ different from $y$ and $z$, $\li{xp}$ and $\li{xw}$ are both in $Y$. This will imply that there exists an $h\in H$ taking $\li{xp}$ to $\li{xw}$, and as $h$ fixes $\li{yz}$ setwise, $\li{yz}$ and $\li{xw}$ must intersect at the point $p\cdot h$.

Let $p\in\points(\li{yz}) - \{y,z\}$ be arbitrary. Since $H$ fixes $\li{yz}$ setwise, the orbit $pH$ is confined to the rank $1$ set of points on $\li{yz}$. Since each line in the orbit $\li{xp}H$, is determined by ($x$ and) its point of intersection with $\li{yz}$, the orbit $\li{xp}H$ has rank one; thus $\li{xp}\in Y$. 

It remains to show that $\li{xw}\in Y$. Using the Veblen configuration, we first show that $wH$ has rank at most $2$ by identifying an $H$-invariant rank $2$ set containing $w$. Define $A:= \{q:\li{qz}\cap\li{xy} \neq \emptyset\}$. The map $A\rightarrow \points(\li{xy}):q\mapsto p\in \li{qz}\cap\li{xy}$ is a definable surjection with fibers of rank $1$. Thus, $\rk A = 2$. As $w\in A$ and $A$ is $H$-invariant,  $\rk(wH) \le 2$.

Now, towards a contradiction, assume that $\li{xw}\notin Y$, so $\rk (\li{xw} H) = n-1$. Since $\rk (\li{xw} H) \le \rk(wH)$, we have $n-1 = \rk (\li{xw} H) \le \rk(wH) \le 2$, so as $n\ge 3$, it must be that $\rk(\li{xw}H) = \rk(wH) = 2$ and $n=3$. 

Next, we find an $r\in \li{xw}$ such that $x,y,z,r$ are in general position. Indeed, choose $s$ so that $x,y,z,s$ are in general position. Thus, $\li{xy}, \li{xz}, \li{xs}$
are in general position in $\lines(x)$, and by our assumption that $\li{xw}\notin Y$, $\li{xy}, \li{xz}, \li{xw}$ are in general position as well. Then $\li{xw}$ and $\li{xs}$ are both in the generic orbit of $\modu{H} = \modu{G_{x,\li{xy},\li{xz}}}$ on $\lines(x)$, so $\li{xw}$ contains an $H$-conjugate of $s$, which we take to be $r$.

Finally, we obtain the contradiction. We know $\rk(wH)  = \rk\left(\li{xw}H\right) = 2$, so as $H_w \le H_{\li{xw}}$, we find that $\left(H_w\right)^\circ = \left(H_{\li{xw}}\right)^\circ$. Also, $\li{xw} = \li{xr}$, so $H_r \le H_{\li{xw}}$. Now, $H_r$ is connected by Corollary~\ref{cor.RankG}, so $H_r \le \left(H_{\li{xw}}\right)^\circ = \left(H_w\right)^\circ$, which contradicts Corollary~\ref{cor.FPA}. Thus, $\li{xw}\in Y$, and we are done.
\end{proof}

\begin{corollary}
The geometry is that of projective $n$-space for $n\ge 3$.
\end{corollary}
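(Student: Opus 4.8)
The plan is to assemble the results of the previous two sections into a verification of the three defining axioms of a projective space, and then to read off the dimension. Axiom~\ref{def.PS.axiom1} is precisely the content of the Proposition that two distinct points lie on a unique line, and the second axiom is precisely the Proposition establishing Veblen's axiom; so for these I would simply quote what has already been proved.

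For the third axiom --- the existence of four points, no three of which are collinear --- the plan is to combine genericity with Lemma~\ref{lem.TrianglesAreInGP}. Since $(X,G)$ is generically $(n+2)$-transitive and $n\ge 3$, it is in particular generically $4$-transitive, so there exist $a,b,c,d\in\points$ in general position; and since every subtuple of a tuple in general position is again in general position, each of the four triples drawn from $\{a,b,c,d\}$ is in general position, hence forms a triangle by Lemma~\ref{lem.TrianglesAreInGP}. Thus no three of $a,b,c,d$ are collinear, and $(\points,\lines)$ is a projective space (coordinatizable over a division ring by the Veblen--Young theorem, as $n\ge 3$).

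To see that its projective dimension is exactly $n$, the plan is to pass to the quotient at a point: for fixed $x\in\points$, the lines through $x$, together with the pencils of lines through $x$ contained in a common plane, form a projective space whose dimension is one less than that of $(\points,\lines)$. The point set of this quotient geometry is $\lines(x)$, and by Corollary~\ref{cor.UniqueEqRel} we have $(\lines(x),G_x/K)\cong(\proj^{n-1}(\fieldF),\pgl_{n}(\fieldF))$; identifying the pencils through $x$ with the projective lines of $\proj^{n-1}(\fieldF)$ --- both being the ``lines'' forced by the incidence axioms already verified, transported across the isomorphism of Corollary~\ref{cor.UniqueEqRel} --- shows the quotient is $\proj^{n-1}(\fieldF)$, so $(\points,\lines)$ has dimension $n$. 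The one place calling for care is exactly this last identification of the quotient's line structure; everything preceding it is a direct appeal to results in hand, so I expect it to be the main, if minor, obstacle.
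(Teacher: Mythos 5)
Your verification of the three axioms is correct and is exactly the paper's argument: the first two axioms are the two preceding propositions, and for the third the paper likewise takes four points in general position (your appeal to Lemma~\ref{lem.TrianglesAreInGP}, or just to the earlier remark that points in general position form a triangle, merely makes the paper's ``clearly'' explicit). Where you diverge is the dimension count. The paper does not establish ``dimension $=n$'' inside this corollary at all; it defers that to the proof of Theorem~\ref{thm.A}, where the definable, arguesian geometry is definably coordinatized over a field $\fieldF$, which is algebraically closed of rank $1$ because the lines have rank $1$, so that $\rk \proj^{m}(\fieldF)=m$ forces $m=n=\rk X$. Your alternative route through the quotient geometry at a point has exactly the soft spot you flag: Corollary~\ref{cor.UniqueEqRel} is only an isomorphism of \emph{permutation groups}, so it does not by itself transport the pencil/plane structure of the quotient geometry at $x$ onto the line structure of $\proj^{n-1}(\fieldF)$; to close that gap you would need to recover the lines of $\proj^{n-1}(\fieldF)$ purely from the $\pgl_{n}(\fieldF)$-action (in the spirit of the $\lambda$-relation of Section~\ref{sec.DefineGeometry}), which is doable but is genuine extra work. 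The paper's rank computation after coordinatization is shorter and sidesteps the issue entirely, so I would adopt it for the dimension statement and keep your argument only for the three incidence axioms.
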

\begin{proof}
It only remains to verify that there exist four points, no three of which are collinear. This is clearly satisfied by four points in general position.
\end{proof}

\section{Proof of the main results}

\begin{proof}[Proof of Theorem~\ref{thm.A}]
Let $(X,G)$ satisfy the hypotheses of Theorem~\ref{thm.A} with $n := \rk X$. By our work above, the geometry with respect to $\points := X$ and $\lines := \{\li{ab}: a\neq b\in X\}$ is that of a projective $n$-space for $n\ge 3$. Our geometry is definable, and as it is automatically arguesian (because $n\ge 3$), it can be definably coordinatized (see for example \cite{ArE57,HiD59}). As our lines have rank $1$, we find that the geometry is that of $\proj^n(\fieldF)$, for some algebraically closed field $\fieldF$ of rank $1$. 

We now have that $G \le \pgl_{n+1}(\fieldF) \rtimes \aut(\fieldF)$. This copy of $\pgl_{n+1}(\fieldF)$ is definable, and we consider $N:= G\cap \pgl_{n+1}(\fieldF)$. Since $G/N$ is a definable group of field automorphisms,   $G/N = 1$ (see \cite[Lemma~4.5]{ABC08}). Thus, $G \le \pgl_{n+1}(\fieldF)$. Further, $\pgl_{n+1}(\fieldF)$ acts generically \emph{sharply} $(n+2)$-transitively on $\points$, so as $G$ acts generically $(n+2)$-transitively, we find that $G=\pgl_{n+1}(\fieldF)$.
\end{proof}

\begin{proof}[Proof of Corollary~\ref{cor.A}]
Let $(X,G)$ be an extremal permutation group of fMr with $n:=\rk X$. Assume Problem~\ref{prob.ProbPGL} is solved for sets of rank less than $n$. 

We first show that the action is virtually definably primitive. If not, there is a definable quotient $(\modu{X},\modu{G})$ with infinitely many classes of infinite size. Thus, $m:= \rk \modu{X} < \rk X$ and $m>0$. Moreover, as we have noted before, $(\modu{X},\modu{G})$  is transitive and generically $(n+2)$-transitive by \cite[Lemma~6.1]{BoCh08} (or \cite[Lemma~4.17]{AlWi15}), so $(\modu{X},\modu{G})$ is transitive and generically $(m+3)$-transitive on a set of rank $m<n$. This contradicts our assumption that Problem~\ref{prob.ProbPGL} is solved for sets of rank less than $n$. 

If $(X,G)$ is not $2$-transitive, we are done (falling into case~\eqref{cor.Item.2} of the statement of the corollary), so assume it is. Then, if $(X-\{x\},G_x)$ is virtually definably primitive for all $x\in X$, we are also done (and in case~\eqref{cor.Item.3}). Thus, $(X,G)$ is $2$-transitive and $(X-\{x\},G_x)$ is virtually definably imprimitive for some $x\in X$. Set $X_x := X-\{x\}$. We now have that $(X_x,G_x)$ is transitive, and by general principles, $(X_x,G_x)$ is generically $(n+1)$-transitive. Let $(\modu{X_x},\modu{G_x})$ be any definable quotient with infinitely many classes of infinite size, and set $r:= \rk \modu{X_x}$. Applying  our inductive hypothesis to $(\modu{X_x},\modu{G_x})$, we find that $(\modu{X_x},\modu{G_x})$ is projective, and we are finished by Theorem~\ref{thm.A}.
\end{proof}

\section{Acknowledgements}
The second author would like to acknowledge the warm hospitality of Universit\'e Claude Bernard Lyon-1, where the majority of the work for this article was carried out, as well as support from Hamilton College and California State University, Sacramento (through the Research and Creative Activity Faculty Awards Program) for making the visits to Lyon possible. The authors would also like to warmly thank the anonymous referee for several helpful comments that served to improve the clarity of the article.
\bibliographystyle{alpha}
\bibliography{WisconsBib}
\end{document}